\def\rr{{\mathbb R}}
\def\rn{{\mathbb{R}^n}}
\def\urn{\mathbb{R}_+^{n+1}}
\def\zz{{\mathbb Z}}
\def\cc{{\mathbb C}}
\def\nn{{\mathbb N}}
\def\cf{{\mathcal F}}
\def\cm{{\mathcal M}}
\def\cp{{\mathcal P}}
\def\cs{{\mathcal S}}
\def\hbf{\emph{\textbf{F}}}
\def\hbg{\emph{\textbf{G}}}
\def\fz{\infty }
\def\az{\alpha}
\def\ez{\epsilon}
\def\lz{\lambda}
\def\sz{\sigma}
\def\vz{\varphi}
\def\vez{\varepsilon}
\def\lf{\left}
\def\r{\right}
\def\hs{\hspace{0.25cm}}
\def\ls{\lesssim}
\def\noz{\nonumber}
\def\st{\subset}
\def\loc{{\mathop\mathrm{\,loc\,}}}
\def\vlp{{L^{p(\cdot)}(\rn)}}
\def\vhs{H^{p(\cdot)}(\rn)}
\newtheorem{theorem}{Theorem}[section]
\newtheorem{lemma}[theorem]{Lemma}
\newtheorem{corollary}[theorem]{Corollary}
\newtheorem{proposition}[theorem]{Proposition}
\theoremstyle{definition}
\newtheorem{remark}[theorem]{Remark}
\newtheorem{definition}[theorem]{Definition}
\renewcommand{\appendix}{\par
   \setcounter{section}{0}%
   \setcounter{subsection}{0}%
   \setcounter{subsubsection}{0}%
   \gdef\thesection{\@Alph\c@section}%
   \gdef\thesubsection{\@Alph\c@section.\@arabic\c@subsection}%
   \gdef\theHsection{\@Alph\c@section.}%
   \gdef\theHsubsection{\@Alph\c@section.\@arabic\c@subsection}%
   \csname appendixmore\endcsname
 }
\numberwithin{equation}{section}
\begin{document}

\arraycolsep=1pt

\title{\bf\Large Characterizations of Variable Exponent Hardy Spaces
via Riesz Transforms
\footnotetext{\hspace{-0.35cm} 2010 {\it
Mathematics Subject Classification}. Primary 42B30;
Secondary 47B06, 42B35, 42B25.
\endgraf {\it Key words and phrases.} Hardy space,
variable exponent, Riesz transform, harmonic function
\endgraf This project is supported by the National
Natural Science Foundation of China
(Grant Nos.~11171027 and 11361020),
the Specialized Research Fund for the Doctoral Program of Higher Education
of China (Grant No. 20120003110003) and the Fundamental Research
Funds for Central Universities of China
(Grant Nos.~2013YB60 and 2014KJJCA10). The third author is supported by
Grant-in-Aid for Scientific Research (B), No. 15H03621,
Japan Society for the Promotion of Science.}}
\author{Dachun Yang, Ciqiang Zhuo\footnote{Corresponding author}\ \ and Eiichi Nakai}
\date{}
\maketitle

\vspace{-0.8cm}

\begin{center}
\begin{minipage}{13cm}
{\small {\textbf{ Abstract}}\quad
Let $p(\cdot):\ \mathbb R^n\to(0,\infty)$ be a variable exponent function
satisfying that there exists a constant $p_0\in(0,p_-)$, where
$p_-:=\mathop{\mathrm {ess\,inf}}_{x\in \mathbb R^n}p(x)$, such that the
Hardy-Littlewood maximal operator is bounded on the variable exponent Lebesgue space
$L^{p(\cdot)/p_0}(\mathbb R^n)$. In this article, via investigating relations
between boundary valued of harmonic functions on the upper half space
and elements of variable exponent Hardy spaces $H^{p(\cdot)}(\mathbb R^n)$
introduced by E. Nakai and Y. Sawano and, independently, by D. Cruz-Uribe and
L.-A. D. Wang,
the authors characterize $H^{p(\cdot)}(\mathbb R^n)$ via the first order
Riesz transforms when $p_-\in (\frac{n-1}n,\infty)$, and via compositions
of all the first order Riesz transforms when $p_-\in(0,\frac{n-1}n)$.}
\end{minipage}
\end{center}

\vspace{-0.1cm}

\section{Introduction\label{s1}}
\hskip\parindent
The main purpose of this article is to establish Riesz transform characterizations
of the variable exponent Hardy spaces introduced by  Nakai and Sawano \cite{ns12}
and, independently, by Cruz-Uribe and Wang \cite{cw14}.
Let $\cs(\rn)$ be the \emph{space of all Schwartz functions} on $\rn$.
Recall that, for all $j\in\{1,\,\dots,\,n\}$, the \emph{$j$-th Riesz transform}
is usually defined by setting, for all $f\in\cs(\rn)$ and $x\in\rn$,
\begin{equation*}
R_j(f)(x):=\lim_{\delta\to0^+}C_{(n)}
\int_{\{y\in\rn:\ |y|>\delta\}}\frac{y_j}{|y|^{n+1}}f(x-y)\,dy,
\end{equation*}
here and hereafter, $\delta\to 0^+$ means that $\delta\in(0,\fz)$ and $\delta\to 0$,
$C_{(n)}:=\frac{\Gamma([n+1]/2)}{\pi^{(n+1)/2}}$ and $\Gamma$ denotes
the Gamma function.

It is well known that the Riesz transform is a natural generalization of the
Hilbert transform on the real line $\rr$ to the higher dimension
Euclidean space $\rn$. Moreover, Riesz transforms are the most typical examples
of Calder\'on-Zygmund operators and have many interesting and useful properties
(see, for example, \cite{Gra1,stein70,stein93} and their references).
Indeed, they are the simplest, non-trivial and ``invariant" operators under
the acting of the group of rotations in the Euclidean space $\rn$,
and also constitute typical and important examples of Fourier multipliers.
Recall also that, when studying the boundedness of Riesz transforms on
Lebesgue spaces $L^p(\rn)$ with $p\in(0,1]$, the Hardy space $H^p(\rn)$ was introduced
as a suitable substitute of $L^p(\rn)$
and now plays very important roles in harmonic analysis and partial differential
equations (see, for example, \cite{CoWe77,FeSt72,muller94}).

Beside the boundedness of Riesz transforms or, more generally, Calder\'on-Zygmund type
operators on various function spaces including Hardy spaces, the Riesz transform
characterizations on Hardy spaces or local Hardy spaces also attract much attention
(see, for example, \cite{ccyy15, FeSt72, Gol79, PeSe08, stein93,Uch01, Wh76}).
The research on characterizations of Hardy spaces via Riesz transforms
originated from Fefferman-Stein's
celebrating seminal paper \cite{FeSt72} in 1972 and then was extended by Wheeden
\cite{Wh76} to the weighted Hardy space $H_w^1(\rn)$. Very recently,
Cao et al. \cite{ccyy15} established Riesz transform characterizations of
some Hardy spaces of Musielak-Orlicz type introduced by Ky \cite{Ky14},
which essentially extends the results of Wheeden \cite{Wh76}
and also the corresponding results of Hardy spaces $H^p(\rn)$
(see, for example, \cite[p.\,123, Proposition 3]{stein93}).
As was well known, when establishing Riesz transform characterizations
of Hardy spaces $H^p(\rn)$,
one needs to extend the elements of $H^p(\rn)$ to the upper half space
$\urn:=\rn\times(0,\fz)$ via Poisson integrals. This extension in turn has a close
relationship with the analytical definition of $H^p(\rn)$ which is the key
starting point for the study on the Hardy space, before one paid attention to
the real-variable theory of $H^p(\rn)$ (see \cite{MuWh78,stein61,StWe60,StWe68}).

It is known that the Hardy space $H^p(\rn)$ can be characterized by the first order
Riesz transforms when $p\in(\frac{n-1}n,\fz)$ (see \cite{FeSt72} or
\cite[p.\,123, Proposition 3]{stein93})
and by the higher order Riesz transforms when $p\in(0,\frac{n-1}n]$
(see \cite[p.\,168]{FeSt72}). Precisely, denote by $\cs'(\rn)$ the \emph{dual space}
of $\cs(\rn)$, namely, the \emph{space of all tempered distributions}.
Recall that a distribution $f\in\cs'(\rn)$ is called a \emph{distribution
restricted at infinity} if there exists a positive number $r$ large enough such that,
for all $\phi\in\cs(\rn)$, $f\ast \phi\in L^r(\rn)$. Then the following
conclusions hold true, which were obtained in \cite{FeSt72} (see also
\cite[p.\,123, Proposition 3 and p.\,133, Item 5.16]{stein93}).

\begin{theorem}[\cite{FeSt72}]\label{t-1.1}
Let $\phi\in\cs(\rn)$ satisfy $\int_\rn\phi(x)\,dx=1$.

{\rm(i)} If $p\in(\frac{n-1}n,\fz)$, then $f\in H^{p}(\rn)$ if and only if
$f$ is a distribution restricted at infinity and there
exists a positive constant $A_1$ such that, for all $\ez\in(0,\fz)$,
$$\|f\ast \phi_\ez\|_{L^p(\rn)}+\sum_{j=1}^n\|R_j(f)\ast\phi_\ez\|_{L^p(\rn)}\le A_1,$$
where, for all $x\in\rn$, $\phi_\ez(x):=\frac1{\ez^n}\phi(\frac x\ez)$;
moreover, $\|f\|_{H^p(\rn)}\sim A_1$ with implicit equivalent positive constants independent of
$f$ and $\ez$.

{\rm(ii)} If $m\in\nn$ and $p\in(\frac{n-1}{n+m-1},\fz)$,
then $f\in H^p(\rn)$ if and only if $f$ is a distribution
restricted at infinity and there exists a positive constant $A_2$ such that,
for all $\ez\in(0,\fz)$,
$$\|f\ast \phi_\ez\|_{L^p(\rn)}+
\sum_{k=1}^m\sum_{j_1,\,\dots,\,j_k=1}^n
\|R_{j_1}\cdots R_{j_k}(f)\ast\phi_\ez\|_{L^p(\rn)}\le A_2;$$
moreover, $\|f\|_{H^p(\rn)}\sim A_2$ with implicit equivalent positive constants independent of
$f$ and $\ez$.
\end{theorem}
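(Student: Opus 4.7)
The plan is to treat necessity and sufficiency separately, with sufficiency being the substantive direction.

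For \emph{necessity}, assuming $f\in H^p(\rn)$, I would invoke the $H^p$-boundedness of first-order Riesz transforms for $p\in(\frac{n-1}n,\fz)$, or of compositions of up to $m$ Riesz transforms for $p\in(\frac{n-1}{n+m-1},\fz)$ in case (ii). Combined with the radial Poisson maximal function characterization of $H^p(\rn)$, which yields $\|g*\phi_\ez\|_{L^p(\rn)}\ls \|g\|_{H^p(\rn)}$ uniformly in $\ez>0$ for any fixed Schwartz $\phi$ with $\int\phi=1$, this immediately gives the desired bound with $g=f$ and $g=R_{j_1}\cdots R_{j_k}(f)$, as well as the restriction-at-infinity property of $f$ via the atomic decomposition.

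For \emph{sufficiency}, the plan is to use harmonic function theory on $\urn$. Let $P_t$ denote the Poisson kernel and, using the restriction-at-infinity hypothesis, form $u_0(x,t):=(f*P_t)(x)$ and $u_j(x,t):=(R_j(f)*P_t)(x)$ for $j\in\{1,\dots,n\}$, so that $F:=(u_0,u_1,\dots,u_n)$ is a harmonic vector on $\urn$. The first step is to translate the hypothesis, stated for a general Schwartz $\phi_\ez$, into a uniform $L^p$-bound $\sup_{t>0}\|F(\cdot,t)\|_{L^p(\rn)}\ls A_1$ on the Poisson extension; this reduction rests on the fact that $P_t*\phi_\ez\to P_t$ as $\ez\to 0^+$ together with an approximation-of-the-identity argument applied to each component of $F$. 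Next, $F$ satisfies the generalized Cauchy--Riemann system
\[
\sum_{j=0}^n\frac{\pa u_j}{\pa x_j}=0,\qquad \frac{\pa u_i}{\pa x_j}=\frac{\pa u_j}{\pa x_i}\quad(0\le i,j\le n),
\]
with the convention $x_0:=t$, and the Stein--Weiss subharmonicity lemma then gives that $|F|^q$ is subharmonic on $\urn$ for every $q\ge\frac{n-1}n$, in particular for the given $p$. Subharmonicity of $|F|^p$ combined with the uniform $L^p$-bound on horizontal slices yields a pointwise majorization of the nontangential maximal function $u_0^*(x)\ls[\cm(|F(\cdot,\eta)|^p)(x)]^{1/p}$ for small $\eta>0$, where $\cm$ is the Hardy--Littlewood maximal operator; letting $\eta\to 0^+$, applying the weak $(1,1)$ boundedness of $\cm$ and Fatou's lemma yields $\|u_0^*\|_{L^p(\rn)}\ls A_1$, which is exactly the Poisson maximal characterization of $f\in H^p(\rn)$ with $\|f\|_{H^p(\rn)}\ls A_1$.

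Part (ii) proceeds by the same strategy but uses the higher-order Stein--Weiss tensor system: extend $F$ to the symmetric, trace-free tensor-valued harmonic function whose components are the Poisson extensions of $R_{j_1}\cdots R_{j_k}(f)$ for $0\le k\le m$, for which subharmonicity of $|F|^q$ holds down to $q\ge\frac{n-1}{n+m-1}$, exactly matching the range. The \textbf{main obstacle} I anticipate is twofold: first, the reduction from the general Schwartz $\phi$ to the Poisson kernel with preservation of the uniform $L^p$-bound (particularly delicate for $p\le 1$, where Young's inequality is unavailable and one must instead exploit the harmonicity of $P_t$ and the distributional nature of $f$); and second, the Stein--Weiss subharmonicity at the critical exponent $\frac{n-1}{n+m-1}$, which requires the full rank-$m$ tensor structure rather than only the $(n+1)$-vector Cauchy--Riemann system and is the genuinely new ingredient beyond the $p>1$ case.
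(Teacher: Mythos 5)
The overall plan matches the standard Fefferman--Stein argument, which the paper cites rather than reproves, and which the paper then adapts to the variable exponent setting in the proofs of Theorems~\ref{t-rtc1} and~\ref{t-rtc2}: necessity via boundedness of Riesz transforms on $H^p$ plus a maximal function characterization; sufficiency via harmonic vectors, the generalized Cauchy--Riemann system, Stein--Weiss subharmonicity, and the Hardy--Littlewood maximal operator.

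However, there is a concrete error in the key step of the sufficiency argument. You invoke subharmonicity of $|\hbf|^p$ itself, which, together with the uniform bound $\sup_{t>0}\||\hbf(\cdot,t)|\|_{L^p(\rn)}<\fz$, only places $|\hbf(\cdot,t)|^p$ uniformly in $L^1(\rn)$. The least harmonic majorant is then $g*P_t$ with $g\in L^1(\rn)$, and the resulting estimate $u_0^\ast\ls[\cm(g)]^{1/p}$ combined with the weak $(1,1)$ bound for $\cm$ delivers only $u_0^\ast\in L^{p,\infty}(\rn)$, not $u_0^\ast\in L^p(\rn)$. The classical proof (and the paper's variable exponent analogue, see the range $\eta\in[\frac{n-1}{n},p_-)$ in Lemma~\ref{l-hv} and Proposition~\ref{p-hv1}) instead fixes an exponent $\eta$ with $\frac{n-1}{n}\le\eta<p$ strictly, so that $|\hbf|^\eta$ is subharmonic with slice bounds in $L^{p/\eta}(\rn)$ where $p/\eta>1$; then $g\in L^{p/\eta}(\rn)$, the strong $L^{p/\eta}$ boundedness of $\cm$ applies, and $u_0^\ast\ls[\cm(g)]^{1/\eta}\in L^p(\rn)$. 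This strict-inequality room is precisely what makes the argument close, and its availability is exactly why the hypothesis is $p>\frac{n-1}{n}$ rather than $p\ge\frac{n-1}{n}$.

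A secondary looseness worth flagging: you propose forming $u_j=R_j(f)*P_t$ directly from the restricted-at-infinity hypothesis and then passing the bound from $\phi_\ez$ to $P_t$ by ``$P_t*\phi_\ez\to P_t$.'' The uniform bound $\|g*\phi_\ez\|_{L^p(\rn)}\le A_1$ does not on its own transfer to $\|g*P_t\|_{L^p(\rn)}$ for $p\le1$; the standard route (followed by the paper in the proof of Theorem~\ref{t-rtc1}) is to run the subharmonicity argument on the smoothed data $f_\ez:=f*\phi_\ez\in L^r(\rn)$ and its conjugate Poisson integrals, obtain $\|(f_\ez)_P^\ast\|_{L^p(\rn)}\ls A_1$ uniformly in $\ez$, and then let $\ez\to0^+$ via Fatou. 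Your order of operations should be reversed to make this step rigorous.
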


In this article, we are devoted to extending conclusions of Theorem
\ref{t-1.1} to the variable exponent Hardy space $\vhs$
which was first studied by Nakai and Sawano \cite{ns12}
and, independently, by Cruz-Uribe and Wang \cite{cw14}. Recall that the \emph{variable
exponent Lebesgue space $\vlp$}, with a variable exponent function $p(\cdot):\ \rn\to(0,\fz)$,
consists of all measurable functions $f$ such that $\int_\rn|f(x)|^{p(x)}\,dx<\fz$.
As a generalization of classical Lebesgue spaces, variable exponent
Lebesgue spaces were introduced by Orlicz \cite{or31} in 1931, however, they
have been the subject of more intensive study since the early 1990s, because
of their intrinsic interest for applications into harmonic analysis, partial
differential equations and variational integrals with nonstandard growth conditions
(see \cite{cfbook,dhr11,ins14} and their references).
The variable exponent Hardy space $\vhs$ extends not only the variable exponent
Lebesgue space $L^{p(\cdot)}(\rn)$ but also the Hardy space $H^p(\rn)$ with constant
exponent.

Particularly, Nakai and Sawano \cite{ns12} introduced Hardy spaces with variable
exponents, $H^{p(\cdot)}(\rn)$, and established their atomic characterizations
which were further applied to consider their duals.
Later, in \cite{Sa13}, Sawano extended the
atomic characterization of the space $H^{p(\cdot)}(\rn)$ in \cite{ns12},
which also improves the corresponding result in \cite{ns12},
and gave out more applications including the boundedness of several operators.
Moreover, Zhuo et al. \cite{zyl14} established their equivalent characterizations
via intrinsic square functions, including the intrinsic Lusin area function,
the intrinsic $g$-function and the intrinsic $g_\lz^\ast$-function.

Independently,
Cruz-Uribe and Wang \cite{cw14} also investigated the variable exponent Hardy
space $\vhs$ with $p(\cdot)$ satisfying some slightly weaker conditions than those used in \cite{ns12}. Moreover,
in \cite{cw14}, characterizations of $\vhs$ in terms of radial or non-tangential
maximal functions or atoms were established, and the boundedness of singular integral
operators were also obtained.

A measurable function $p(\cdot):\ \rn\to(0,\fz)$ is called a
\emph{variable exponent}. For any variable exponent $p(\cdot)$, let
$$p_-:=\mathop{\rm ess\,inf}_{x\in \rn}p(x)
\quad {\rm and}\quad
p_+:=\mathop{\rm ess\,sup}_{x\in \rn}p(x).$$
Denote by $\cp(\rn)$ the \emph{collection of variable exponents}
$p(\cdot)$ \emph{satisfying} $0<p_-\le p_+<\fz$.

For a measurable function $f$ on $\rn$ and a variable exponent $p(\cdot)\in\cp(\rn)$,
the \emph{modular} $\varrho_{p(\cdot)}(f)$ of $f$
is defined by setting
$\varrho_{p(\cdot)}(f):=\int_\rn|f(x)|^{p(x)}\,dx$ and the
\emph{Luxemburg} (also called \emph{Luxemburg-Nakano}) \emph{quasi-norm} is given by
\begin{equation*}
\|f\|_{\vlp}:=\inf\lf\{\lz\in(0,\fz):\ \varrho_{p(\cdot)}(f/\lz)\le1\r\}.
\end{equation*}
Then the \emph{variable exponent Lebesgue space} $\vlp$ is defined to be the
set of all measurable functions $f$ such that $\varrho_{p(\cdot)}(f)<\fz$
equipped with the quasi-norm $\|f\|_{\vlp}$.

\begin{remark}\label{r-vlp}
 Let $p(\cdot)\in\cp(\rn)$.

(i) If $p_-\in[1,\fz)$, then $L^{p(\cdot)}(\rn)$
is a Banach space (see \cite[Theorem 3.2.7]{dhr11}).
Particularly, for all $\lz\in\cc$ and $f\in\vlp$,
$\|\lz f\|_{\vlp}=|\lz|\|f\|_{\vlp}$ and, for all $f,\ g\in\vlp$,
$$\|f+g\|_{\vlp}\le \|f\|_{\vlp}+\|g\|_{\vlp}.$$

(ii) For any non-trivial function $f\in \vlp$, it holds true that
$\varrho_{p(\cdot)}(f/\|f\|_{\vlp})=1$ (see, for example, \cite[Proposition 2.21]{cfbook}).

(iii) If $\int_\rn[|f(x)|/\delta]^{p(x)}\,dx\le c$ for some $\delta\in(0,\fz)$
and some positive constant $c$ independent of $\delta$, then it is easy to see that
$\|f\|_{\vlp}\le C\delta$, where $C$ is a positive constant independent of $\delta$,
but depending on $p_-$ (or $p_+$) and $c$.

(iv) (\textbf{The H\"older inequality}) Assume that
$p_-\in[1,\fz)$.
It was proved in \cite[Lemma 3.2.20]{dhr11} (see also \cite[Theorem 2.26]{cfbook})
that, if $f\in \vlp$ and $g\in L^{p^\ast(\cdot)}(\rn)$, then $fg\in L^1(\rn)$
and
$$\int_\rn|f(x)g(x)|\,dx\le C\|f\|_{\vlp}\|g\|_{L^{p^\ast(\cdot)}(\rn)},$$
where $p^\ast(\cdot)$ denotes the \emph{dual variable exponent} of $p(\cdot)$ defined by
$\frac1{p(x)}+\frac 1{p^\ast(x)}=1$ for all $x\in\rn$, with $1/\fz=0$,
and $C$ is a positive constant
depending on $p_-$ or $p_+$, but independent of $f$ and $g$.

(v) Assume that $p_-\in[1,\fz)$.
For any given measurable function $f$ on $\rn$, let
$$\|f\|_{\vlp}^\ast:=\sup \lf|\int_\rn f(x)g(x)\,dx\r|,$$
where the supremum is taken over all $g\in L^{p^\ast(\cdot)}(\rn)$ satisfying
$\|g\|_{L^{p^\ast(\cdot)}(\rn)}\le1$. Then it was proved in \cite[Theorem 2.34]{cfbook}
that, for all $f\in\vlp$, $\|f\|_{\vlp}\sim \|f\|_{\vlp}^\ast$ with
the implicit equivalent positive constants independent of $f$.
\end{remark}

For all $r\in(0,\fz)$, denote by $L_\loc^r(\rn)$ the \emph{set of all locally
$r$-integrable functions} on $\rn$ and, for any measurable set $E\st \rn$,
by $L^r(E)$ the \emph{set of all measurable functions $f$ such that}
$$\|f\|_{L^r(E)}:=\lf\{\int_E|f(x)|^r\,dx\r\}^{1/r}<\fz.$$
Recall that the \emph{Hardy-Littlewood maximal operator $\cm$} is defined by setting,
for all $f\in L_\loc^1(\rn)$ and $x\in\rn$,
$$\cm(f)(x):=\sup_{B\ni x}\frac1{|B|}\int_B|f(y)|\,dy,$$
where the supremum is taken over all balls $B$ of $\rn$ containing $x$.

For any $p(\cdot)\in\cp(\rn)$, $p(\cdot)$ is said to belong to $\cm\cp(\rn)$
if there exists $p_0\in(0,p_-)$ such that the maximal operator $\cm$
is bounded on $L^{p(\cdot)/p_0}(\rn)$.

For any $N\in\nn$, let
$$\cf_N(\rn):=\lf\{\psi\in\cs(\rn):\ \sup_{\az,\,\beta\in\zz_+^n,\,|\az|,\,|\beta|\le N}
\sup_{x\in\rn}|x^\alpha \partial^\beta\psi(x)|\le1\r\},$$
where, for all $\beta:=(\beta_1,\dots,\beta_n)\in\zz_+^n$,
$|\beta|:=\beta_1+\cdots+\beta_n$ and
$\partial^\beta:=(\frac{\partial}{\partial x_1})^{\beta_1}
\cdots (\frac{\partial}{\partial x_n})^{\beta_n}$.
In what follows, for all $\psi\in\cs(\rn)$, $t\in(0,\fz)$ and $\xi\in\rn$,
let $\psi_t(\xi):=t^{-n}\psi(\xi/t)$.
Then, for all $f\in\cs'(\rn)$, the \emph{radial grand maximal function}
$f^\ast_{N,+}$ is defined by setting, for all $x\in\rn$,
\begin{equation}\label{granmf}
f^\ast_{N,+}(x):=\sup_{\psi\in\cf_N(\rn)}M^\ast_{\psi,+}(f)(x),
\end{equation}
where, $M^\ast_{\psi,+}(f)$ denotes the \emph{radial maximal function} of $f$ defined
by setting, for all $x\in\rn$,
$$M^\ast_{\psi,+}(f)(x):=\sup_{t\in(0,\fz)}|f\ast \psi_t(x)|.$$

\begin{definition}
Let $p(\cdot)\in \cm\cp(\rn)$ with $p_0\in(0,p_-)$ and
$N\in (\frac n{p_0}+n+1,\fz)$. Then the \emph{variable
exponent Hardy space $\vhs$} is defined to be the set of all
$f\in\cs'(\rn)$ such that $f^\ast_{N,+}\in\vlp$ endowed with the quasi-norm
$\|f\|_{\vhs}:=\|f^\ast_{N,+}\|_{\vlp}$.
\end{definition}

\begin{remark}\label{r-defn}
(i) We point out that the variable exponent Hardy space was first studied
by Nakai and Sawano \cite{ns12}. However, in \cite{ns12}, the variable exponent
$p(\cdot)\in\cp(\rn)$
is required to satisfy \emph{globally H\"older continuous condition}, namely,
there exist positive constants $C_{\log}(p)$ and $C_\fz$, and $p_\fz\in\rr$ such that,
for all $x$, $y\in\rn$,
$$|p(x)-p(y)|\le \frac{C_{\log}(p)}{\log(e+1/|x-y|)}$$
and
$$|p(x)-p_\fz|\le \frac{C_\fz}{\log(e+|x|)}.$$

(ii) Independently, Cruz-Uribe and Wang \cite{cw14}
also studied the variable exponent Hardy space, but they only assume that
$p(\cdot)\in\cm\cp(\rn)$.
We point out that,
if $p(\cdot)$ satisfies the globally H\"older continuous condition,
then $p(\cdot)\in \cm\cp(\rn)$; see \cite{cfbook,dhr11}.
Therefore, the assumption on $p(\cdot)$ in \cite{cw14} is slightly
weaker than that in \cite{ns12}. In the present article, we only
assume that $p(\cdot)\in\cm\cp(\rn)$.

(iii) Let $p(\cdot)\in\cp(\rn)$. If the maximal operator $\cm$
is bounded on $L^{p(\cdot)}(\rn)$, then, for all $s\in(1,\fz)$, $\cm$ is
also bounded on $L^{sp(\cdot)}(\rn)$ (see \cite[Lemma 2.12]{cw14}).
\end{remark}

The main results of this article are stated as follows.

\begin{theorem}\label{t-rtc1}
Let $p(\cdot)\in \cm\cp(\rn)$ with $p_-\in(\frac{n-1}n,\fz)$, $f\in\cs'(\rn)$
and $\phi\in\cs(\rn)$
satisfy $\int_\rn\phi(x)\,dx=1$. Then the following items are equivalent:

{\rm(i)} $f\in\vhs$;

{\rm(ii)} $f$ is a distribution restricted at infinity and there exists a positive
constant $A_3$ such that, for all $\ez\in(0,\fz)$,
\begin{equation}\label{rtc-3}
\|f\ast\phi_\ez\|_{\vlp}+\sum_{j=1}^n\|R_j(f)\ast\phi_\ez\|_{\vlp}\le A_3.
\end{equation}

Moreover, there exists a positive constant $C$, independent of $f$,
such that
$$C^{-1}\|f\|_{\vhs}\le A_3\le C\|f\|_{\vhs}.$$

Furthermore, if $p_-\in[1,\fz)$, then \eqref{rtc-3} can be replaced by
\begin{equation}\label{rtc-3a}
\|f\|_{\vlp}+\sum_{j=1}^n\|R_j(f)\|_{\vlp}\le A_3.
\end{equation}
\end{theorem}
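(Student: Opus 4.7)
The plan for \textrm{(i)} $\Rightarrow$ \textrm{(ii)} is short. After normalizing $\phi$ so that a constant multiple of it lies in $\cf_N(\rn)$, the pointwise bound $|f\ast\phi_\ez(x)|\le Cf^\ast_{N,+}(x)$ yields $\|f\ast\phi_\ez\|_{\vlp}\lesssim\|f\|_{\vhs}$; for each Riesz transform term I would invoke the $\vhs$-boundedness of $R_j$, available from the atomic theory in \cite{Sa13,cw14}, so that $R_jf\in\vhs$ with comparable norm and the same pointwise estimate applies. That $f$ is a distribution restricted at infinity follows by decomposing $f$ atomically: finite combinations of $(p(\cdot),\fz)$-atoms lie in $L^r(\rn)$ for every $r\in(1,\fz)$, and the Schwartz decay of $\phi$ upgrades this to $f\ast\phi\in L^r(\rn)$ for some $r$ large.

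The crux is \textrm{(ii)} $\Rightarrow$ \textrm{(i)}, and the plan is to run the Fefferman--Stein conjugate-harmonic-system argument at the variable-exponent scale. For each $\ez>0$ set $f_\ez:=f\ast\phi_\ez\in\vlp$ and $g_{j,\ez}:=R_jf\ast\phi_\ez=R_j(f_\ez)\in\vlp$, with $\|f_\ez\|_{\vlp}+\sum_{j=1}^n\|g_{j,\ez}\|_{\vlp}\le A_3$ by hypothesis. Form the Poisson extensions $u_0^\ez(x,t):=(P_t\ast f_\ez)(x)$ and $u_j^\ez(x,t):=(P_t\ast g_{j,\ez})(x)$ on $\urn$, and verify, using the Fourier multiplier symbol $-i\xi_j/|\xi|$ of $R_j$ together with the harmonicity of Poisson integrals, that $F^\ez:=(u_0^\ez,u_1^\ez,\dots,u_n^\ez)$ satisfies the generalized Cauchy--Riemann equations of Stein--Weiss. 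The classical Stein--Weiss theorem then tells us that $|F^\ez|^q$ is subharmonic on $\urn$ for every $q\ge(n-1)/n$.

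Next use $p_-\in((n-1)/n,\fz)$ together with $p(\cdot)\in\cm\cp(\rn)$, Remark \ref{r-defn}(iii), and the standard openness of the $\cm$-boundedness range on variable-exponent Lebesgue spaces, to select $q\in((n-1)/n,p_-)$ for which $\cm$ is bounded on $L^{p(\cdot)/q}(\rn)$. Subharmonicity of $|F^\ez|^q$ combined with the usual domination of a Poisson convolution by the Hardy--Littlewood maximal gives, for $0<\delta<t$,
\begin{equation*}
|F^\ez(x,t)|^q\le \lf(P_{t-\delta}\ast|F^\ez(\cdot,\delta)|^q\r)(x)\le C\cm\lf(|F^\ez(\cdot,\delta)|^q\r)(x).
\end{equation*}
Taking the supremum in $t>\delta$ and then sending $\delta\to 0^+$ (via Fatou applied to the modular $\varrho_{p(\cdot)/q}$, using pointwise convergence of the Poisson averages $u_j^\ez(\cdot,\delta)\to g_{j,\ez}$) produces
\begin{equation*}
\lf[F^\ez_\ast(x)\r]^q:=\sup_{t>0}|F^\ez(x,t)|^q\le C\cm\lf(|f_\ez|^q+\sum_{j=1}^n|g_{j,\ez}|^q\r)(x).
\end{equation*}
Unfolding $\|G\|_{\vlp}=\||G|^q\|_{L^{p(\cdot)/q}(\rn)}^{1/q}$ and invoking boundedness of $\cm$ on $L^{p(\cdot)/q}(\rn)$ delivers $\|F^\ez_\ast\|_{\vlp}\le CA_3$ uniformly in $\ez$. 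Letting $\ez\to 0^+$ and applying Fatou once more yields $\|F_\ast\|_{\vlp}\le CA_3$ for the Poisson extension $F$ of $(f,R_1f,\dots,R_nf)$; in particular the radial Poisson maximal of $f$ alone is bounded in $\vlp$ by $CA_3$, and the tangential-maximal comparison of Fefferman--Stein (adapted to $\vlp$ via the same boundedness of $\cm$ on $L^{p(\cdot)/q}(\rn)$) upgrades this to the grand maximal bound $\|f^\ast_{N,+}\|_{\vlp}\le CA_3$, i.e.\ $f\in\vhs$.

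For the final assertion when $p_-\in[1,\fz)$, $\vlp$ is a Banach space (Remark \ref{r-vlp}(i)), so the uniform hypothesis $\|f\ast\phi_\ez\|_{\vlp}\le A_3$ yields, via weak-$\ast$ compactness in the duality of Remark \ref{r-vlp}(iv)--(v), an $L^{p(\cdot)}$-representative of $f$ with norm bounded by $A_3$, and similarly for each $R_jf$; conversely, if $f\in\vlp$ then $|f\ast\phi_\ez|\lesssim\cm(f)$ pointwise and $\cm$ is bounded on $\vlp$ when $p_->1$, yielding \eqref{rtc-3} from \eqref{rtc-3a}. The principal obstacle I anticipate is the triple-scale passage to the limit in the third paragraph above: one has to coordinate the mollification scale $\ez$, the Poisson height $t$, and the auxiliary scale $\delta$ and justify each limit at the modular or pointwise level, because $\vlp$ is only a quasi-Banach space once $p_-<1$ and the usual norm-duality tools are unavailable.
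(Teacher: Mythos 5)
Your approach to both implications is structurally the same as the paper's: for \textrm{(i)} $\Rightarrow$ \textrm{(ii)} you use the grand maximal function bound plus $\vhs$-boundedness of the Riesz transforms, and for \textrm{(ii)} $\Rightarrow$ \textrm{(i)} you run the Fefferman--Stein conjugate harmonic system via subharmonicity of $|F|^q$ for $q\ge(n-1)/n$, the maximal-function domination of Poisson integrals, and the $\cm$-boundedness on $L^{p(\cdot)/q}(\rn)$. The paper packages the same ideas into Lemma~\ref{l-hv}, Proposition~\ref{p-hc} and Proposition~\ref{p-hv1}, which makes the limit passages cleaner. That said, there are two places where your argument as written has genuine gaps.

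First, the passage you flag as a ``principal obstacle'' really is one as phrased. You claim the pointwise inequality $[F^\ez_\ast(x)]^q\le C\cm(|f_\ez|^q+\sum_j|g_{j,\ez}|^q)(x)$ by letting $\delta\to 0^+$ in $|F^\ez(x,t)|^q\le C\cm(|F^\ez(\cdot,\delta)|^q)(x)$. But $\cm$ does not commute with pointwise limits, so ``Fatou applied to the modular'' cannot deliver a pointwise inequality; it can only deliver a quasi-norm bound. Either argue at the level of $\|\cdot\|_{\vlp}$ throughout (bound $\|\sup_{t>\delta}|F^\ez(\cdot,t)|\|_{\vlp}$ uniformly in $\delta$ and use monotone convergence on the left), or establish the majorant inequality $|F^\ez(x,t)|^q\le\bigl(|F^\ez(\cdot,0)|^q\ast P_t\bigr)(x)$ directly via the least-harmonic-majorant theorem of Nualtaranee (this is exactly Lemma~\ref{l-hv} in the paper, which requires the boundedness condition \eqref{hbf-K} that you should verify from $f_\ez, g_{j,\ez}\in L^r(\rn)$ for $r$ large). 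Similarly, the $\ez\to0^+$ limit for $f\ast P_t\ast\phi_\ez\to f\ast P_t$ requires you to extract an a.e.-convergent subsequence (the paper goes through convergence in measure and the Riesz lemma); Fatou alone does not license it.

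Second, your handling of the final assertion $p_-\in[1,\fz)$ has a gap at $p_-=1$. You reduce \eqref{rtc-3a} to \eqref{rtc-3} using $|f\ast\phi_\ez|\lesssim\cm(f)$ together with boundedness of $\cm$ on $\vlp$, but under the hypothesis $p(\cdot)\in\cm\cp(\rn)$ the operator $\cm$ is only known to be bounded on $L^{p(\cdot)/p_0}(\rn)$ for some $p_0<p_-$; by Remark~\ref{r-defn}(iii) this yields boundedness on $\vlp$ only when one can take $p_0>1$, i.e.\ only when $p_->1$. The borderline case $p_-=1$ is included in the theorem's statement but excluded by your argument. The paper avoids this by not reducing \eqref{rtc-3a} to \eqref{rtc-3}: it instead decomposes $f,R_1(f),\dots,R_n(f)\in\vlp\subset L^{p_-}(\rn)+L^{p_+}(\rn)$, verifies the condition \eqref{hbf-K} for the unmollified harmonic vector, and applies Lemma~\ref{l-hv} and Proposition~\ref{p-hv1} directly.
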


\begin{theorem}\label{t-rtc2}
Let $m\in\nn\cap [2,\fz)$ and $p(\cdot)\in \cm\cp(\rn)$ satisfy
$p_-\in(\frac{n-1}{n+m-1},\fz)$,
and let $f\in\cs'(\rn)$ and $\phi\in\cs(\rn)$
satisfy $\int_\rn\phi(x)\,dx=1$.
Then the following items are equivalent:

{\rm(i)} $f\in \vhs$;

{\rm(ii)} $f$ is a distribution restricted at infinity and
there exists a positive constant $A_4$ such that, for all $\vez\in(0,\fz)$,
$$\|f\ast\phi_\vez\|_\vlp+\sum_{k=1}^m\sum_{j_1,\,\dots,\,j_k=1}^n
\|R_{j_1}\cdots R_{j_k}(f)\ast\phi_\vez\|_\vlp\le A_4.$$

Moreover, there exists a positive constant $C$, independent of $f$, such that
$$C^{-1}\|f\|_{\vhs}\le A_4\le C\|f\|_{\vhs}.$$
\end{theorem}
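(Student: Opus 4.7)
My plan is to adapt the proof of Theorem \ref{t-rtc1} (the first-order case) by replacing the tuple $(u,u_1,\dots,u_n)$ of Poisson integrals of $f$ and its first-order Riesz transforms with the larger tuple indexed by all multi-indices of length at most $m$. The two implications require separate tools.

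For (i)$\Rightarrow$(ii): Since $p(\cdot)\in\cm\cp(\rn)$ with $p_-\in(\frac{n-1}{n+m-1},\fz)$, the atomic/molecular characterization of $\vhs$ from \cite{ns12,Sa13,cw14} supplies atoms with arbitrarily many vanishing moments; combined with the Calder\'on--Zygmund estimates this gives the boundedness on $\vhs$ of each composition $R_{j_1}\cdots R_{j_k}$ with $1\le k\le m$. Next, for any $g\in\vhs$, a rescaling of $\phi$ into $\cf_N(\rn)$ yields the pointwise control $\sup_{\vez\in(0,\fz)}|g\ast\phi_\vez|\ls g^\ast_{N,+}$ and hence $\|g\ast\phi_\vez\|_{\vlp}\ls\|g\|_\vhs$ uniformly in $\vez$. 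Applying this to $g=f$ and to $g=R_{j_1}\cdots R_{j_k}(f)$ and summing gives (ii) with $A_4\le C\|f\|_\vhs$. That $f$ is restricted at infinity is derived exactly as in Theorem \ref{t-rtc1}, using that atoms lie in $L^1(\rn)\cap L^\fz(\rn)$.

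For (ii)$\Rightarrow$(i): Fix $\vez\in(0,\fz)$. Since $f$ is restricted at infinity, $f\ast\phi_\vez\in L^r(\rn)$ for some $r\in(0,\fz)$, so the Poisson extension $u(x,t):=(f\ast\phi_\vez)\ast P_t(x)$ is harmonic on $\urn$. For each multi-index $\alpha=(j_1,\dots,j_k)$ with $1\le k\le m$, set $u_\alpha(x,t):=(R_{j_1}\cdots R_{j_k}(f)\ast\phi_\vez)\ast P_t(x)$. The family $\{u\}\cup\{u_\alpha:\,1\le|\alpha|\le m\}$ is a generalized Stein--Weiss system of conjugate harmonic functions (see \cite[p.\,234]{stein70} and \cite[p.\,168]{FeSt72}). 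Define
$$F(x,t):=\lf(|u(x,t)|^2+\sum_{k=1}^m\sum_{|\alpha|=k}|u_\alpha(x,t)|^2\r)^{1/2}.$$
The key analytic input is that $|F|^q$ is subharmonic on $\urn$ for every $q\ge q_m:=\frac{n-1}{n+m-1}$. Since $p_->q_m$ and $p(\cdot)\in\cm\cp(\rn)$, one may fix $q\in(q_m,p_-)$ for which $\cm$ is bounded on $L^{p(\cdot)/q}(\rn)$ (Remark \ref{r-defn}(iii)). Subharmonicity combined with the mean-value property and a standard non-tangential argument then yields, for every $x\in\rn$ and every $s\in(0,\fz)$,
$$\sup_{t\in(s,\fz)}|u(x,t)|\ls\lf\{\cm\lf(|F(\cdot,s)|^q\r)(x)\r\}^{1/q}.$$
By hypothesis (ii) and standard Poisson convergence, $\|F(\cdot,s)\|_{\vlp}\le CA_4$ uniformly for small $s$. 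Taking the $\vlp$-norm, invoking the $L^{p(\cdot)/q}$-boundedness of $\cm$, and letting $s\to 0^+$ produces
$$\lf\|\sup_{t\in(0,\fz)}|(f\ast\phi_\vez)\ast P_t|\r\|_{\vlp}\le CA_4$$
uniformly in $\vez$. A final passage $\vez\to 0^+$ via the Fatou property of $\vlp$, combined with the equivalence between the Poisson-maximal and grand-maximal characterizations of $\vhs$ (implicit in \cite{ns12,cw14}), yields $f\in\vhs$ with $\|f\|_{\vhs}\le CA_4$.

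The principal obstacle is establishing the subharmonicity of $|F|^q$ for the sub-unit exponent $q_m=\frac{n-1}{n+m-1}$. This demands a careful analysis of the gradient tensor of the conjugate harmonic system, showing that the associated tensor has effective rank $n+m-1$, which is precisely what lowers the critical subharmonicity exponent from $\frac{n-1}{n}$ (the first-order case) down to $q_m$ and therefore pins down the range restriction $p_->\frac{n-1}{n+m-1}$. A secondary difficulty lies in the passage $\vez\to 0^+$, since $\vlp$ lacks the reflexivity and duality of classical $L^p$; this is handled by coupling the uniform-in-$\vez$ bound with the distributional convergence $f\ast\phi_\vez\to f$ and a lower-semicontinuity argument for the Luxemburg quasi-norm based on Remark \ref{r-vlp}(iii).
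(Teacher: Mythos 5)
Your plan follows the same overall strategy as the paper: build a conjugate harmonic system containing the Poisson integrals of $f\ast\phi_\vez$ and of $R_{j_1}\cdots R_{j_k}(f\ast\phi_\vez)$, invoke subharmonicity of a suitable $q$-power of its Euclidean norm for $q$ down to $\frac{n-1}{n+m-1}$, bound the non-tangential maximal function via the Hardy--Littlewood maximal function on $L^{p(\cdot)/q}(\rn)$, and pass to the limit $\vez\to0^+$ by Fatou. That part matches. But there are two gaps worth flagging.

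First, and most importantly, the subharmonicity of $|F|^q$ for $q\ge\frac{n-1}{n+m-1}$ is not a fact that holds for the flat tuple you wrote down. What is actually true (the content of Calder\'on--Zygmund's 1964 theorem, Lemma~\ref{l-ho2}, used with Uchiyama's reconstruction, Lemma~\ref{l-ho3}) is that if $\textbf{G}$ is a \emph{rank-$m$ tensor-valued function} on $\urn$ with $\textbf{G}$ and $\nabla\textbf{G}$ symmetric and $\textbf{G}$ trace zero, then $|\textbf{G}|^q$ is subharmonic for $q\ge\frac{n-1}{n+m-1}$ --- because such a $\textbf{G}$ is $\nabla^m u$ for a single harmonic $u$, and the Calder\'on--Zygmund result applies to $|\nabla^m u|$. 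The paper therefore packages the data as $G^{(\vez)}_{j_1,\dots,j_m}=(R_{j_1}\cdots R_{j_m}(f_\vez)\ast P_t)$ with indices ranging over $\{0,1,\dots,n\}$ and $R_0:=I$, which automatically puts every lower-order composition into the tensor with the correct multiplicities coming from ordered index tuples, and then checks symmetry and trace-zero via the Fourier transform. Your $F$, a sum over \emph{distinct} multi-indices of all orders $1\le k\le m$, is only comparable (not equal) to $|\textbf{G}^{(\vez)}|$, and subharmonicity is not stable under multiplication by a position-dependent (nor even constant) comparison factor, so the assertion ``$|F|^q$ is subharmonic'' is not justified as stated. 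You also explicitly leave this as an unresolved ``principal obstacle,'' whereas it is precisely the step that needs a citation or a proof. The fix is easy --- work with the structured tensor norm $|\textbf{G}^{(\vez)}|$, for which Corollary~\ref{c-ho1} gives exactly what you need, and then observe $|u|\le|\textbf{G}^{(\vez)}|$ to control $u^\ast$ --- but as written the argument has a hole at its central estimate.

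Second, a minor remark on (i)$\Rightarrow$(ii): you do not need the atomic/molecular decomposition. Since each single $R_j$ is bounded on $\vhs$ (the paper's Lemma~\ref{l-rtb}, a special case of \cite[Theorem~8.4]{cw14}), the compositions $R_{j_1}\cdots R_{j_k}$ are automatically bounded, and the rest follows from the grand maximal characterization exactly as in Theorem~\ref{t-rtc1}. This is not an error, just an unnecessary detour.
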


\begin{remark}\label{r-1.7x}
(i) Obviously, when $p(\cdot)\equiv p$ with $p\in(\frac{n-1}n,\fz)$,
Theorem \ref{t-rtc1} is just Theorem \ref{t-1.1}(i)
(see also \cite[p.\,123, Proposition 3]{stein93}) and, when $p(\cdot)\equiv p$ with
$p\in(\frac{n-1}{n+m+1},\fz)$, Theorem \ref{t-rtc2} is just
Theorem \ref{t-1.1}(ii) (see also \cite[p.\,122, Item 5.16]{stein93}).

(ii) Observer that, for any $p(\cdot)\in\cp(\rn)$, there exists an $m\in\nn$ such that
$p_-\in(\frac{n-1}{n+m-1},\fz)$. Thus, in Theorem \ref{t-rtc2},
we actually obtain the Riesz transform characterization of $\vhs$
for all $p(\cdot)\in\cm\cp(\rn)$ with $p_-\in(0,\fz)$ via different choices of $m\in\nn$.

(iii) If $f$ is a distribution restricted at infinity, then,
for any $j\in\{1,\dots,n\}$, $R_j(f)$ is well defined as a distribution
(see \cite[p.\,123]{stein93}).

(iv) Recall that, in \cite{ccyy15}, Cao et al. characterized the Musielak-Orlicz
Hardy space $H^\vz(\rn)$, which was introduced by Ky \cite{Ky14},
via Riesz transforms. Observe that, if
\begin{equation}\label{mo-f}
\vz(x,t):=t^{p(x)}\quad{\rm for\ all}\quad x\in\rn\quad{\rm and}\quad t\in(0,\fz),
\end{equation}
then $H^\vz(\rn)=H^{p(\cdot)}(\rn)$. However, a general Musielak-Orlicz function
$\vz$ satisfying all the assumptions in \cite{Ky14} (and hence \cite{ccyy15})
may not have the form as in \eqref{mo-f}. On the other hand, it was proved in
\cite[Remark 2.23(iii)]{yyz13} that there exists a variable exponent
$p(\cdot)$ satisfying the globally H\"older continuous condition and hence belonging to
$\cm\cp(\rn)$, but $t^{p(\cdot)}$ is not a uniformly Muckenhoupt weight, which
was required in \cite{Ky14} (and hence \cite{ccyy15}).
Therefore, the Musielak-Orlicz Hardy space $H^\vz(\rn)$ in \cite{Ky14}
(and hence \cite{ccyy15}) and the variable exponent Hardy space $\vhs$ in \cite{cw14}
(and hence in the present article) can not cover each other.
\end{remark}

This article is organized as follows.

In Section \ref{s2}, we aim at giving out the proofs of Theorems \ref{t-rtc1}
and \ref{t-rtc2}.
To this end, via borrowing some ideas used in the proof
of Theorem \ref{t-1.1}, we not only make full use of
the non-tangential and the radial maximal function
characterizations of $\vhs$, but also draw supports from properties of
harmonic functions.

Precisely, in Subsection \ref{s2.1}, we first recall some known results on $\vhs$
including its characterizations
in terms of the non-tangential maximal function corresponding to Poisson kernels
in Lemma \ref{l-vhs} below (see also \cite[Theorem 3.1]{cw14})
and the non-tangential grand maximal function
in Lemma \ref{l-vhs1} below (see also \cite[Proposition 2.1]{zyl14}).
Using these characterizations, we investigate relations between harmonic functions
and Poisson integrals of elements of $\vhs$
in Proposition \ref{p-hc} below, which are further applied in Lemma \ref{l-hv} below
to find the harmonic majorant of harmonic vectors satisfying
the generalized Cauchy-Riemann equation \eqref{gcr-equ} below.
Moreover, the boundary value of such a harmonic vector in turn determines
a harmonic function $u$
such that its non-tangential maximal function belongs to $\vlp$ (see Proposition
\ref{p-hv1} below). To prove Proposition \ref{p-hv1}, we need to show that
the set $\vhs\cap L^{1+r}(\rn)$ with $r\in[p_+,\fz)$ is dense in $\vhs$ via
the Poisson kernels,
which improves \cite[Proposition 4.2]{cw14} in which Cruz-Uribe and Wang
proved that $\vhs\cap L_{\loc}^1(\rn)$ is dense in $\vhs$
and may be of independent interest.
The boundedness of Riesz transforms on $\vhs$, which is essentially established
by Cruz-Uribe and Wang in \cite{cw14}, also plays a key role in the proof of Theorem
\ref{t-rtc1}.

In Subsection \ref{s2.2}, by some arguments similar to those used in
the proof of Theorem \ref{t-rtc1}, we prove Theorem \ref{t-rtc2} via tensor-valued
functions.
We point out that one of important facts used in the proofs of Theorems \ref{t-rtc1}
and \ref{t-rtc2} is that, for all $f\in \vlp$,
$$\int_\rn \lf[|f(x)|/\|f\|_\vlp\r]^{p(x)}\,dx=1,$$
if $0<p_-\le p_+<\infty$,
which is well known (see \cite[Proposition 2.21]{cfbook}). Another important
fact used in the proofs of Theorems \ref{t-rtc1}
and \ref{t-rtc2} is that, for any $f\in\vhs$ and $t\in(0,\fz)$, the Poisson integral
$f\ast P_t\in L^r(\rn)$ as long as $r\in[p_+,\fz)$, which is proved in
Lemma \ref{l-dense}(ii) below.

Different from the strategy used in the case of Musielak-Orlicz Hardy spaces
in \cite{ccyy15}, we do not need to restrict the working space to be $\vhs\cap L^2(\rn)$
to guarantee that $\{R_j(f)\}_{j=1}^n$ make sense.
Indeed, for any $f\in\vhs$, we can prove that $f$ is a distribution restricted at
infinity (see Lemma \ref{l-dense}(i)), which is not clear whether it is true for elements of
Musielak-Orlicz Hardy spaces or not.
Thus, for any $j\in\{1,\,\dots,\,n\}$ and $f\in\vhs$, $R_j(f)$
is well defined as a distribution according to Remark \ref{r-1.7x}(iii).

Finally, we make some conventions on notation. Let $\nn:=\{1,2,\dots\}$ and $\zz_+:=\nn\cup\{0\}$.
We denote by
$C$ a \emph{positive constant} which is independent of the main
parameters, but may vary from line to line. The \emph{symbol}
$A\ls B$ means $A\le CB$. If $A\ls B$ and $B\ls A$, then we write $A\sim B$.
If $E$ is a subset of $\rn$, we denote by $\chi_E$ its
\emph{characteristic function} and by $E^\complement$ the set $\rn\backslash E$.
For all $r\in(0,\fz)$ and $x\in\rn$, let $B(x,r):=\{y\in\rn:\ |x-y|<r\}$.

\section{Proofs of main results\label{s2}}
\hskip\parindent
This section is devoted to the proofs of of Theorems \ref{t-rtc1} and
\ref{t-rtc2}. Precisely, Theorem \ref{t-rtc1} is proved in Subsection \ref{s2.1}
and the proof of Theorem \ref{t-rtc2} is presented out in Subsection \ref{s2.2}.

\subsection{The case $p_-\in(\frac{n-1}n,\fz)$\label{s2.1}}
\hskip\parindent
In this subsection, we prove Theorem \ref{t-rtc1}. To this end,
we need more preparations and begin with the following notation.

A distribution $f\in\cs'(\rn)$ is called a \emph{bounded distribution}
if, for all $\psi\in\cs(\rn)$, $f\ast \psi\in L^\fz(\rn)$.
For a bounded distribution $f$, its \emph{non-tangential maximal function}
is defined by setting, for all $x\in\rn$,
$$
f_P^\ast (x):=\sup_{(y,t)\in\urn,\,|x-y|<t}|f\ast P_t(y)|,
$$
where $P_t$ denotes the \emph{Poisson kernel}, which is defined
by setting, for all $(x,t)\in\rr_+^{n+1}$,
\begin{equation}\label{po-ke}
P_t(x)
:=\frac{\Gamma([n+1]/2)}{\pi^{(n+1)/2}}\frac t{(t^2+|x|^2)^{(n+1)/2}}.
\end{equation}

\begin{remark}\label{r-welld}
It is known that,
if $f$ is a bounded distribution, then $f\ast P_t$ is well defined.
Indeed, by \cite[p.\,90]{stein93}, we see that there exist
$\psi^{(1)},\ \psi^{(2)}\in\cs(\rn)$ and $h\in L^1(\rn)$ such that, for all $t\in(0,\fz)$,
\begin{equation}\label{poisson}
P_t=\lf(\psi^{(1)}\r)_t\ast h_t+\lf(\psi^{(2)}\r)_t.
\end{equation}
Thus, if $f$ is a bounded distribution, then
$f\ast P_t=f\ast (\psi^{(1)})_t\ast h_t+f\ast (\psi^{(2)})_t$
and hence $f\ast P_t$ is well defined.
Moreover, the function $u(x,t):=f\ast P_t(x)$ for $(x,t)\in\urn$ is harmonic on $\urn$
(see \cite[p.\,90]{stein93}).
\end{remark}

The following conclusion was obtained in \cite[Theorem 3.1]{cw14}.

\begin{lemma}\label{l-vhs}
Let $p(\cdot)\in \cm\cp(\rn)$ and $f\in \cs'(\rn)$. Then
the following three items are equivalent:

{\rm(i)} $f\in \vhs$;

{\rm (ii)} there exists $\Phi\in\cs(\rn)$ with $\int_\rn \Phi(x)\,dx\neq0$ such that
$M^\ast_{\Phi,+}(f)\in\vlp$;

{\rm(iii)} $f$ is a bounded distribution and $f_P^\ast \in\vlp$.

Moreover,
$$\|f\|_{\vhs}\sim \|M^\ast_{\Phi,+}(f)\|_{\vlp}\sim \|f_P^\ast \|_{\vlp},$$
where the implicit equivalent positive constants are independent of $f$.
\end{lemma}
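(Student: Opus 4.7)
The plan is to establish the cycle (i) $\Rightarrow$ (ii) $\Rightarrow$ (iii) $\Rightarrow$ (i), thereby lifting the classical Fefferman-Stein equivalence among the radial, non-tangential, and grand maximal functions to the variable exponent setting. The workhorse throughout is a pointwise Fefferman-Stein type majorization combined with the boundedness of $\cm$ on $L^{p(\cdot)/p_0}(\rn)$ for some $p_0\in(0,p_-)$, which is available precisely because $p(\cdot)\in\cm\cp(\rn)$. The implication (i) $\Rightarrow$ (ii) is immediate: after rescaling the given $\Phi$ so that $C^{-1}\Phi\in\cf_N(\rn)$ for a constant $C\in(0,\fz)$ depending on finitely many seminorms of $\Phi$, one has $M^\ast_{\Phi,+}(f)\le Cf^\ast_{N,+}$ pointwise, and the desired norm bound follows by taking $\vlp$ quasi-norms.

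For (ii) $\Rightarrow$ (iii), I would first verify that $f$ is a bounded distribution by combining $M^\ast_{\Phi,+}(f)\in\vlp$ with a tail argument on its level sets, which forces $\|f\ast\psi_t\|_{L^\fz(\rn)}<\fz$ for every $\psi\in\cs(\rn)$ and $t\in(0,\fz)$. The main analytic step is the pointwise inequality
\[
f_P^\ast(x)\le C\lf[\cm\lf(\lf[M^\ast_{\Phi,+}(f)\r]^{p_0}\r)(x)\r]^{1/p_0},
\]
which I would derive by splitting $f\ast P_t$ via the decomposition $P_t=(\psi^{(1)})_t\ast h_t+(\psi^{(2)})_t$ of Remark \ref{r-welld}: the $(\psi^{(2)})_t$ piece is, up to constants, a non-tangential $\psi^{(2)}$-maximal function of $f$, while the $(\psi^{(1)})_t\ast h_t$ piece is absorbed by integrating a wider-aperture non-tangential $\psi^{(1)}$-maximal function of $f$ against $|h|\in L^1(\rn)$. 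Both are then controlled via the standard tangential-vs-non-tangential Fefferman-Stein comparison, giving the displayed inequality. Taking $\vlp$ quasi-norms and invoking the boundedness of $\cm$ on $L^{p(\cdot)/p_0}(\rn)$ then yields $\|f_P^\ast\|_\vlp\ls\|M^\ast_{\Phi,+}(f)\|_\vlp$.

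For (iii) $\Rightarrow$ (i), the harmonic extension $u(x,t):=f\ast P_t(x)$ on $\urn$ has the property that $|u|^{p_0}$ is subharmonic for every $p_0\in(0,\fz)$, which lets me majorize the non-tangential maximal function of $u$ by $[\cm([f_P^\ast]^{p_0})]^{1/p_0}$. To convert this into a bound on the grand maximal function $f^\ast_{N,+}$, I would represent each $\psi_t$ with $\psi\in\cf_N(\rn)$ as an integral superposition of Poisson kernels via a subordination-type identity (valid provided $N$ is chosen large enough to absorb decay losses), thereby reducing the $\psi$-radial maximal function to the non-tangential Poisson maximal function up to one more Hardy-Littlewood maximal. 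A final application of $\cm$-boundedness on $L^{p(\cdot)/p_0}(\rn)$ delivers $\|f\|_\vhs\ls\|f_P^\ast\|_\vlp$.

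The main obstacle I expect lies in this last passage from $f_P^\ast$ to $f^\ast_{N,+}$: since $P\notin\cs(\rn)$, one cannot invoke the grand maximal function machinery directly, and the reduction must route through the subharmonicity of $|f\ast P_t|^{p_0}$ together with a Poisson superposition formula, both of which require care in their dependence on $p_0$ and $N$. The variable exponent aspect itself enters only through the need to choose a single $p_0\in(0,p_-)$ that simultaneously witnesses the $\cm\cp(\rn)$ condition and the subharmonicity range; since both constraints are open at the lower end of $(0,p_-)$, such a $p_0$ always exists.
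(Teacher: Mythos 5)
The paper itself does not prove Lemma~\ref{l-vhs}: it quotes it directly as \cite[Theorem 3.1]{cw14}, so there is no in-paper argument to compare your cycle against. Evaluated on its own terms, your implication (i) $\Rightarrow$ (ii) is fine, and your sketch of (ii) $\Rightarrow$ (iii) follows the right route (the Peetre-type tangential maximal estimate combined with the Poisson kernel decomposition $P_t=(\psi^{(1)})_t\ast h_t+(\psi^{(2)})_t$ of Remark~\ref{r-welld}), even though the main labor is concealed inside ``standard Fefferman--Stein comparison.''

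Your step (iii) $\Rightarrow$ (i), however, contains two genuine errors. First, for a single harmonic function $u$ the function $|u|^{p_0}$ is subharmonic only when $p_0\ge 1$: away from the zero set one computes $\Delta(|u|^{p_0})=p_0(p_0-1)|u|^{p_0-2}|\nabla u|^2$, which is nonpositive when $p_0<1$. The fractional thresholds $\eta\ge\frac{n-1}{n}$ (and $\eta\ge\frac{n-1}{n+m-1}$) belong to conjugate systems satisfying the generalized Cauchy--Riemann equations, not to an isolated harmonic function --- this is precisely what Proposition~\ref{p-hv1} and Lemma~\ref{l-hv} in the paper need the vector structure for. Moreover, since $u^\ast=f_P^\ast$ and $\cm g\ge g$ almost everywhere, the estimate $u^\ast\ls[\cm((f_P^\ast)^{p_0})]^{1/p_0}$ is pointwise trivial and conveys no information. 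Second, you cannot ``represent each $\psi_t$ with $\psi\in\cf_N(\rn)$ as an integral superposition of Poisson kernels'': such a superposition $\int_0^\fz P_s\,d\mu(s)$ has Fourier transform $\int_0^\fz e^{-2\pi s|\xi|}\,d\mu(s)$, necessarily a radial function of $\xi$, while $\cf_N(\rn)$ contains non-radial Schwartz functions. The workable route is the reverse one: fix a single $\Phi\in\cs(\rn)$ with $\int_\rn\Phi\ne0$ whose Fourier transform divided by $\widehat{P}$ can be cut off into an integrable, polynomially decaying kernel $k$ with $\Phi=P\ast k$, giving $M^\ast_{\Phi,+}(f)\ls f_P^\ast$ up to a tangential correction and hence (iii) $\Rightarrow$ (ii). But your proposed cycle $(\mathrm{i})\Rightarrow(\mathrm{ii})\Rightarrow(\mathrm{iii})\Rightarrow(\mathrm{i})$ never establishes (ii) $\Rightarrow$ (i), so routing (iii) back through (ii) leaves a gap; the standard arrangement is $(\mathrm{ii})\Rightarrow(\mathrm{i})$, $(\mathrm{i})\Rightarrow(\mathrm{iii})$, $(\mathrm{iii})\Rightarrow(\mathrm{ii})$, with the hard Fefferman--Stein reduction living entirely in $(\mathrm{ii})\Rightarrow(\mathrm{i})$.
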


\begin{remark}\label{r-vhs}
If $f\in \vhs$, then, by Lemma \ref{l-vhs}(iii), $f$ is a bounded distribution.
Thus, for any $f\in \vhs$, $f\ast P_t$ is well defined by
Remark \ref{r-welld}.
\end{remark}

For all $N\in\nn$ and $f\in\cs'(\rn)$, the
\emph{non-tangential grand maximal function}, $f_{N,\triangledown}^\ast$, of $f$
is defined by setting, for all $x\in\rn$,
$f_{N,\triangledown}^\ast(x):=\sup_{\psi\in\cs_N(\rn)}M_{\psi,\triangledown}^\ast(f)(x),$
where $M_{\psi,\triangledown}^\ast(f)$ denotes the \emph{non-tangential maximal function} of $f$
defined by setting, for all $y\in\rn$,
\begin{equation}\label{ntmf}
M_{\psi,\triangledown}^\ast(f)(y):=\sup_{t\in(0,\fz),\,|\xi-y|<t}|f\ast\psi_t(\xi)|.
\end{equation}

We have the following equivalent characterization of $\vhs$ via
non-tangential grand maximal functions, whose proof can be found in
\cite[Proposition 2.1]{zyl14} (see also \cite[Lemma 7.9]{cw14}).

\begin{lemma}\label{l-vhs1}
Let $p(\cdot)\in \cm\cp(\rn)$ with $p_0\in(0,p_-)$ and $N\in(\frac n{p_0}+n+1,\fz)$.
Then $f\in\vhs$ if and only if $f\in\cs'(\rn)$ and $f_{N,\triangledown}^\ast\in\vlp$; moreover,
there exists a positive constant $C$ such that, for all $f\in\vhs$,
$$C^{-1}\|f_{N,\triangledown}^\ast\|_{\vlp}\le \|f\|_{\vhs}\le
C\|f_{N,\triangledown}^\ast\|_{\vlp}.$$
\end{lemma}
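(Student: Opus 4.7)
The plan is to use the trivial pointwise comparison $f^\ast_{N,+}\le f_{N,\triangledown}^\ast$ for one direction and a Fefferman--Stein-type maximal majorization combined with the assumption $p(\cdot)\in\cm\cp(\rn)$ for the other. For the easy direction $\|f\|_{\vhs}\lesssim \|f_{N,\triangledown}^\ast\|_{\vlp}$, fix $x\in\rn$ and $\psi\in\cf_N(\rn)$; the choice $\xi=x$ is admissible in the supremum \eqref{ntmf}, so $|f\ast\psi_t(x)|\le M_{\psi,\triangledown}^\ast(f)(x)$ for every $t\in(0,\fz)$, whence $M^\ast_{\psi,+}(f)(x)\le M_{\psi,\triangledown}^\ast(f)(x)$. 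Taking the supremum over $\psi\in\cf_N(\rn)$ and then the $\vlp$-quasi-norm in $x$ gives $\|f\|_{\vhs}=\|f^\ast_{N,+}\|_{\vlp}\le \|f_{N,\triangledown}^\ast\|_{\vlp}$.

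For the reverse inequality $\|f_{N,\triangledown}^\ast\|_{\vlp}\lesssim \|f\|_{\vhs}$, I would first establish the pointwise bound
\begin{equation*}
f_{N,\triangledown}^\ast(x)\le C\,\lf[\cm\lf((f^\ast_{N,+})^{p_0}\r)(x)\r]^{1/p_0}
\quad\text{for all }x\in\rn,
\end{equation*}
which is the variable-exponent analogue of a classical estimate of Fefferman and Stein (see, e.g., \cite{stein93}). The argument proceeds as in the constant-exponent case: given $(y,t)\in\urn$ with $|x-y|<t$ and $\psi\in\cf_N(\rn)$, one rewrites $f\ast\psi_t(y)$ in terms of a dilation and translation of an auxiliary test function acting at $x$, estimates dilated convolutions $f\ast\widetilde{\psi}_s(z)$ for $z$ in a suitable neighbourhood of $x$ by pointwise values of $f^\ast_{N,+}$, and recovers an $L^{p_0}$-average of $(f^\ast_{N,+})^{p_0}$ over a ball centered at $x$. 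The hypothesis $N>n/p_0+n+1$ is precisely what guarantees that, after normalization by a factor depending only on $n$, $N$ and $p_0$, the resulting auxiliary family still lies in $\cf_N(\rn)$.

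With the pointwise bound at hand, the homogeneity of the Luxemburg quasi-norm yields
\begin{equation*}
\|f_{N,\triangledown}^\ast\|_{\vlp}\le C\,\lf\|\lf[\cm\lf((f^\ast_{N,+})^{p_0}\r)\r]^{1/p_0}\r\|_{\vlp}
= C\,\lf\|\cm\lf((f^\ast_{N,+})^{p_0}\r)\r\|_{L^{p(\cdot)/p_0}(\rn)}^{1/p_0},
\end{equation*}
after which the boundedness of $\cm$ on $L^{p(\cdot)/p_0}(\rn)$ granted by the assumption $p(\cdot)\in\cm\cp(\rn)$ with $p_0\in(0,p_-)$ produces
\begin{equation*}
\lf\|\cm\lf((f^\ast_{N,+})^{p_0}\r)\r\|_{L^{p(\cdot)/p_0}(\rn)}^{1/p_0}\lesssim
\lf\|(f^\ast_{N,+})^{p_0}\r\|_{L^{p(\cdot)/p_0}(\rn)}^{1/p_0}=\|f^\ast_{N,+}\|_{\vlp}=\|f\|_{\vhs}.
\end{equation*}

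The main obstacle is the pointwise Fefferman--Stein-type majorization together with the careful bookkeeping of the parameter $N$ relative to $p_0$: one must verify that the translated and dilated auxiliary test functions, after normalization by a constant depending only on $n$, $N$ and $p_0$, still belong to $\cf_N(\rn)$, and this is exactly what forces $N>n/p_0+n+1$. Once that numerology is settled, the rest is a clean combination of the classical argument with the variable-exponent maximal inequality built into the definition of $\cm\cp(\rn)$.
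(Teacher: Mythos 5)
The paper does not prove Lemma~\ref{l-vhs1} itself but cites \cite[Proposition 2.1]{zyl14} and \cite[Lemma 7.9]{cw14}; the standard (and cited) argument for this specific comparison is considerably simpler than what you propose, and your sketch of the hard direction, while arriving at a true conclusion, goes through heavier machinery than needed and misattributes the role of the parameter $N$.

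Your easy direction is correct. For the hard direction, however, notice that the \emph{grand} maximal function (unlike the maximal function for a single fixed $\psi$) admits a direct pointwise bound without any appeal to the Hardy--Littlewood maximal operator: fix $x\in\rn$, $\psi\in\cf_N(\rn)$, $t\in(0,\fz)$ and $\xi$ with $|\xi-x|<t$, and set $z:=(x-\xi)/t$, so $|z|<1$. Then $f\ast\psi_t(\xi)=f\ast(\tau_z\psi)_t(x)$, where $\tau_z\psi(\cdot):=\psi(\cdot-z)$. Expanding $(v+z)^\az=\sum_{\gamma\le\az}\binom{\az}{\gamma}v^\gamma z^{\az-\gamma}$ and using $|z|<1$, one checks that $\sup_{|\az|,|\bz|\le N}\sup_v|(v+z)^\az\pa^\bz\psi(v)|\le 2^N$, so $2^{-N}\tau_z\psi\in\cf_N(\rn)$. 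Hence $|f\ast\psi_t(\xi)|\le 2^N f^\ast_{N,+}(x)$ for all such $(\psi,\xi,t)$, giving the pointwise inequality $f^\ast_{N,\triangledown}(x)\le 2^N f^\ast_{N,+}(x)$. The quasi-norm comparison then follows immediately, with no use of $\cm$ at all. Your Fefferman--Stein-type estimate $f^\ast_{N,\triangledown}\lesssim[\cm((f^\ast_{N,+})^{p_0})]^{1/p_0}$ is a true but weaker consequence of this pointwise bound, and proving it directly by the tangential-maximal-function/averaging route you sketch (without first obtaining the pointwise estimate above) is a delicate argument that is normally reserved for the single-test-function case, where translation inside the family is not available.

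The claim that ``the hypothesis $N>n/p_0+n+1$ is precisely what guarantees'' that the translated family lies in $\cf_N$ is not correct: the normalization constant in the translation argument is $2^N$, depending only on $N$, and the argument is valid for every $N\in\nn$. The restriction $N>n/p_0+n+1$ appears in the hypothesis of the lemma simply because it is built into the definition of $\vhs$ (where it is needed for the independence of $\vhs$ from $N$ and for the single-test-function characterizations); it is not used in the comparison of the two grand maximal functions. Similarly, $p(\cdot)\in\cm\cp(\rn)$ is not actually used in this lemma once one takes the direct pointwise route.
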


Let $u$ be a measurable function on $\urn$. Then its \emph{non-tangential maximal
function $u^\ast$} is defined by setting, for all $x\in\rn$,
\begin{equation*}
u^\ast(x):=\sup_{t\in(0,\fz),\,|y-x|<t}|u(y,t)|.
\end{equation*}

Recall that a function $u$ on $\urn$ is said to be \emph{harmonic} if,
for all $t\in(0,\fz)$ and $x:=(x_1,\dots,x_n)\in\rn$,
$\partial_t^2 u(x,t)+\sum_{j=1}^n\partial_{x_j}^2u(x,t)=0.$

On relations between harmonic functions and Poisson integrals of
elements of the variable exponent Hardy space $\vhs$, we have the following conclusion.

\begin{proposition}\label{p-hc}
Let $p(\cdot)\in \cm\cp(\rn)$ and $u$ be a harmonic function on $\urn$.
Then $u^\ast\in \vlp$ if and only if there exists $f\in \vhs$ such that,
for all $(x,t)\in\urn$, $u(x,t)=f\ast P_t(x)$. Moreover, there exists a positive
constant $C$, independent of $f$ and $u$, such that
$$C^{-1}\|f\|_{\vhs}\le \|u^\ast\|_{\vlp}\le C\|f\|_{\vhs}.$$
\end{proposition}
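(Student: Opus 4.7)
The sufficiency direction is immediate from Lemma \ref{l-vhs}. Indeed, if $f \in \vhs$ and $u(x,t):=f\ast P_t(x)$, then by Remark \ref{r-welld} the function $u$ is harmonic on $\urn$; moreover, from the definitions it is clear that $u^\ast(x)=f_P^\ast(x)$ for all $x\in\rn$, so Lemma \ref{l-vhs}(iii) yields $\|u^\ast\|_\vlp=\|f_P^\ast\|_\vlp\sim\|f\|_\vhs$, giving one half of both the equivalence and the norm comparison.

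For the necessity direction, suppose $u$ is harmonic on $\urn$ with $u^\ast\in\vlp$. The plan is to extract a boundary value $f$ in $\cs'(\rn)$ from the slices $u(\cdot,t)$ as $t\to 0^+$. The first step is to notice that, for each fixed $t>0$ and every $x\in\rn$, the point $(x,t)$ lies in the cone defining $u^\ast(x)$, so $|u(x,t)|\le u^\ast(x)$. Consequently the family $\{u(\cdot,t)\}_{t>0}$ is bounded in $\vlp$ uniformly in $t$, with $\|u(\cdot,t)\|_\vlp\le\|u^\ast\|_\vlp$. Combining this uniform bound with the standard local embedding of $\vlp$ into $L^{p_-}_\loc(\rn)$ (and hence into $\cs'(\rn)$), one can extract a sequence $t_k\to 0^+$ such that $u(\cdot,t_k)\to f$ in $\cs'(\rn)$ for some tempered distribution $f$.

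The next step is to identify $u$ with $f\ast P_t$. For any $s, t\in(0,\fz)$, since $v(x,\tau):=u(x,\tau+s)$ is harmonic on $\urn$ with boundary slice $u(\cdot,s)\in\vlp$ (which, by the pointwise bound by $u^\ast$, belongs locally to $L^{p_-}(\rn)$), the classical Poisson representation for harmonic functions with controlled $\vlp$-boundary data gives the semigroup identity
\begin{equation*}
u(x,t+s)=u(\cdot,s)\ast P_t(x),\qquad (x,t)\in\urn.
\end{equation*}
Letting $s=t_k\to 0^+$, the left-hand side tends to $u(x,t)$ by continuity of $u$. For the right-hand side, one appeals to the decomposition \eqref{poisson} of $P_t$ to express $u(\cdot,s)\ast P_t=u(\cdot,s)\ast(\psi^{(1)})_t\ast h_t+u(\cdot,s)\ast(\psi^{(2)})_t$; since each $(\psi^{(i)})_t\in\cs(\rn)$, the $\cs'$-convergence $u(\cdot,t_k)\to f$ is enough to pass to the limit and conclude $u(x,t)=f\ast P_t(x)$ on $\urn$. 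In particular $f$ is a bounded distribution in the sense of Remark \ref{r-welld}, and $f_P^\ast=u^\ast\in\vlp$. Applying Lemma \ref{l-vhs}(iii) then yields $f\in\vhs$ together with the remaining inequality $\|f\|_\vhs\sim\|f_P^\ast\|_\vlp=\|u^\ast\|_\vlp$, completing the proof.

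The main obstacle will be making the limit extraction and the passage to the limit precise when $p_-<1$, because then $\vlp$ is only a quasi-Banach space and one cannot invoke standard weak-$\ast$ compactness in a dual pairing. The workaround outlined above is to use the pointwise domination $|u(\cdot,t)|\le u^\ast$ combined with $L^{p_-}_\loc$-boundedness to obtain a uniform bound in $\cs'(\rn)$, together with the polynomial growth inherited from the restriction at infinity type control, which suffices to select a convergent subsequence in the weak topology of $\cs'(\rn)$. The identification $u(x,t+s)=u(\cdot,s)\ast P_t(x)$ must also be justified from harmonicity rather than assumed; this is done either by appealing to the classical Poisson representation on the half-space with $L^{p_-}_\loc$-boundary slice (noting that $u(\cdot,s)$ grows slowly enough because of the $u^\ast$-control) or by invoking uniqueness of solutions to the Dirichlet problem with polynomially bounded data.
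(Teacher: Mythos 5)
Your sufficiency argument is identical to the paper's and correct. Your necessity argument has the right overall shape --- take slices $u(\cdot,t)$, extract a limit $f$ in $\cs'(\rn)$, identify $u=f\ast P_t$, and conclude via Lemma \ref{l-vhs} --- but the compactness step has a genuine gap, and the order in which you perform the two main steps is the reason the gap appears.

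The problematic assertion is that $\vlp\hookrightarrow L^{p_-}_{\loc}(\rn)\hookrightarrow\cs'(\rn)$, which would give a uniform $\cs'(\rn)$-bound on the slices and allow weak extraction. When $p_-<1$ the second inclusion is simply false: a function that is locally in $L^{p_-}$ with $p_-<1$ need not define a tempered (or even ordinary) distribution, since the pairing $\int f\phi$ can diverge. You flag the quasi-Banach difficulty in your last paragraph, but the proposed workaround --- pointwise domination by $u^\ast$ plus $L^{p_-}_{\loc}$-boundedness --- still does not yield a uniform $\cs'(\rn)$-bound on $\{u(\cdot,t)\}_{t>0}$. The paper circumvents this by reversing your two steps. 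It first shows, from the modular inequality $\varrho_{p(\cdot)}\bigl(u^\ast/\|u^\ast\|_\vlp\bigr)\le 1$ and the pointwise bound $|u(x,t)|\le u^\ast(y)$ for $y\in B(x,t)$, that $u$ is \emph{bounded} on each half-space $\{t\ge\vez\}$. This boundedness (not any $\vlp$-growth estimate) is what lets one invoke the maximum principle and uniqueness for the Dirichlet problem, yielding the semigroup identity $u(\cdot,\vez)\ast P_t=u(\cdot,t+\vez)$. Only then, with the identity in hand and $u_\vez^\ast\le u^\ast$, does one get $f_\vez:=u(\cdot,\vez)\in\vhs$ with $\|f_\vez\|_\vhs\lesssim\|u^\ast\|_\vlp$ uniformly, and it is the continuous embedding $\vhs\hookrightarrow\cs'(\rn)$ (cited from \cite[Remark 3.5]{ns12}) --- not any $L^{p_-}_{\loc}$ embedding --- that gives the uniform $\cs'$-bound needed for weak-$*$ compactness. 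Your claim that the semigroup identity follows from a ``classical Poisson representation with $\vlp$-boundary data'' also needs the boundedness on half-spaces rather than slow growth, since the relevant uniqueness theorem (\cite[p.\,40, Theorem 1.5]{StWe71}) is a maximum principle for bounded harmonic functions. With those two corrections --- establish boundedness on half-spaces first, derive the semigroup identity and hence $f_\vez\in\vhs$ with a uniform bound, and extract the limit using the $\vhs\hookrightarrow\cs'(\rn)$ embedding --- your outline becomes the paper's proof.
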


\begin{proof}
Assume that there exists $f\in \vhs$ such that $u(x,t)=f\ast P_t(x)$ for all
$(x,t)\in\urn$. Then, by Lemma \ref{l-vhs}, we see that
$f_P^\ast \in\vlp$, which, together with the fact that
$u^\ast= f_P^\ast $, implies that $u^\ast\in \vlp$
and $\|u^\ast\|_\vlp\ls \|f\|_{\vhs}$.

Conversely, suppose that $u^\ast\in \vlp$.
We first claim that $u$ is bounded in any half space $\{(x,t)\in\urn:\ t\ge\vez>0\}$.
We may assume that $\|u^*\|_{\vlp}>0$.
Indeed, observe that, for any $(x,t)\in\urn$ and $y\in B(x,t)$,
$|u(x,t)|\le u^\ast (y)$. Then, by Remark \ref{r-vlp}(ii),
we find that, for all $(x,t)\in\urn$ satisfying $|u(x,t)|\ge \|u^*\|_{\vlp}$,
\begin{eqnarray*}
|u(x,t)|^{p_-}
&&= \frac 1{|B(x,t)|}
\int_{B(x,t)}\lf[\frac{|u(x,t)|}{\|u^*\|_{\vlp}}\r]^{p_-}\|u^*\|_{\vlp}^{p_-}\,dy\\
&&\ls t^{-n} \|u^*\|_{\vlp}^{p_-}\int_{B(x,t)}\lf[\frac{u^{\ast}(y)}{\|u^*\|_{\vlp}}\r]^{p(y)}\,dy
\ls t^{-n} \|u^*\|_{\vlp}^{p_-},
\end{eqnarray*}
which shows that, for all $(x,t)\in\urn$ with $t\ge \vez>0$,
$$
 |u(x,t)|\ls (1+\vez^{-n/p_-})\|u^\ast\|_{\vlp}.
$$
Thus, the above claim holds true.

Now, we let $\vez\in(0,\fz)$ be fixed and $f_\vez(\cdot):=u(\cdot,\vez)$.
We prove that, for all $(x,t)\in \urn$,
\begin{equation}\label{2.5x1}
f_\vez\ast P_t(x)=u(x, t+\vez)=:u_\vez(x,t).
\end{equation}
Indeed, from the above claim, \cite[p.\,48, Theorem 2.1(b)]{StWe71} and
the fact $f_\vez$ is hounded and continuous,
we deduce that $f_\vez\ast P_t$
is bounded, harmonic on $\urn$, continuous on the closure
$\overline{\urn}:=\rn\times[0,\fz)$ and equals to $f_\vez$ on the boundary
$\rn\times\{0\}$. On the other hand,
by the assumption on $u$ and the above claim again, we know that $u(\cdot, t+\vez)$ is also bounded,
harmonic on $\urn$, continuous on the closure
$\overline{\urn}:=\rn\times[0,\fz)$ and has the same value with $f_\vez\ast P_t$
on the boundary $\rn\times\{0\}$.
Thus, by the maximum modulus principle of harmonic functions
(see \cite[p.\,40,\ Theorem 1.5]{StWe71}), we conclude that \eqref{2.5x1} holds true.

Since $u_\vez^\ast\le u^\ast\in \vlp$, it follows, from Lemma \ref{l-vhs}, that
\begin{equation}\label{hc-x1}
\|f_\vez\|_{\vhs}\sim \|u_\vez^\ast\|_{\vlp}\ls\|u^\ast\|_{\vlp}.
\end{equation}
This implies that $\{f_\vez\}_{\vez\in(0,\fz)}$ is uniformly bounded in $\vhs$
and hence in $\cs'(\rn)$ due to \cite[Remark 3.5]{ns12}.
Therefore, by the weak compactness of $\cs'(\rn)$
(see, for example, \cite[p.\,119]{StWe71}),
we conclude that there exist a subsequence $\{\vez_k\}_{k\in\nn}\st(0,\fz)$,
$\vez_k\to 0$ as $k\to\fz$
and an element $f\in\cs'(\rn)$ such that $f_{\vez_k}\to f$ as $k\to\fz$ in $\cs'(\rn)$.
Moreover, for any $\Phi\in\cs(\rn)$ with $\int_\rn\Phi(x)\,dx\neq0$
and $x\in\rn$, we have
\begin{equation*}
M^\ast_{\Phi,+}(f)(x):=\sup_{t\in(0,\fz)}|f\ast\Phi_t(x)|
= \sup_{t\in(0,\fz)}\lim_{k\to\fz}|f_{\vez_k}\ast\Phi_t(x)|
\le \lim_{k\to\fz}M^\ast_{\Phi,+}(f_{\vez_k})(x),
\end{equation*}
which, combined with Lemma \ref{l-vhs}, the Fatou lemma (see \cite[Theorem 2.61]{cfbook})
and \eqref{hc-x1}, implies that
\begin{eqnarray*}
\|f\|_{\vhs}
&&\sim\|M^\ast_{\Phi,+}(f)\|_{\vlp}\ls\lf\|\lim_{k\to\fz}M^\ast_{\Phi,+}(f_{\vez_k})\r\|_{\vlp}\\
&&\ls \liminf_{k\to\fz}\lf\|M^\ast_{\Phi,+}(f_{\vez_k})\r\|_{\vlp}
\sim \liminf_{k\to\fz}\|f_{\vez_k}\|_{\vhs}\ls \|u^\ast\|_{\vlp}.
\end{eqnarray*}
This shows that $f\in \vhs$ and hence $f\ast P_t$ is well defined due to
Remark \ref{r-welld}.
By this, \eqref{poisson}, the facts that $u$ is harmonic
and $\{f_{\vez_k}\}_{k\in\nn}$ converges to $f$ in $\cs'(\rn)$,
we find that, for all $(x,t)\in\urn$,
$$u(x,t)=\lim_{k\to\fz}u(x,t+\vez_k)=
\lim_{k\to\fz}(f_{\vez_k}\ast P_t)(x)=(f\ast P_t)(x),$$
which completes the proof of Proposition \ref{p-hc}.
\end{proof}

Let $\hbf:=\{u_0,u_1,\dots,u_n\}$ be a vector of harmonic functions
on $\urn$. Then
$\hbf$ is said to satisfy the \emph{generalized Cauchy-Riemann equation} if, for all
$j,\,k\in\{0,1,\dots,n\}$,
\begin{equation}\label{gcr-equ}
\lf\{
\begin{array}{rl}
\sum\limits_{j=0}^n\dfrac{\partial u_j}{\partial x_j}=0,\\
\dfrac{\partial u_j}{\partial x_k}=\dfrac{\partial u_k}{\partial x_j},
\end{array}\r.
\end{equation}
where, for any $(x,t)\in \urn$, we let $x:=(x_1,\dots,x_n)$ and $x_0:=t$.

For the harmonic vector satisfying \eqref{gcr-equ}, we have the following
conclusion.

\begin{proposition}\label{p-hv1}
Let $p(\cdot)\in \cm\cp(\rn)$ with $p_-\in(\frac{n-1}n,\fz)$
and $u$ be a harmonic function on $\urn$. Then
$u^\ast\in \vlp$ if and only if there exists a harmonic vector
$\textbf{F}:=\{u_0,u_1,\dots,u_n\}$ satisfying \eqref{gcr-equ}, $u_0=u$
and
\begin{equation}\label{hbf-norm}
\sup_{t\in(0,\fz)}\||\textbf{F}(\cdot,t)|\|_{\vlp}<\fz,
\end{equation}
here and hereafter,
$|\textbf{F}|:=(\sum_{i=0}^n|u_i|^2)^{1/2}$.
Moreover, it holds true that
$$\|u^\ast\|_{\vlp}\sim \sup_{t\in(0,\fz)}\||\textbf{F}(\cdot,t)|\|_{\vlp}$$
with the implicit equivalent positive constants independent of $u$ and $\textbf{F}$.
\end{proposition}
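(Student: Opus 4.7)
My plan is to handle the two implications separately.

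For necessity, I will assume $u^\ast\in\vlp$ and appeal to Proposition \ref{p-hc} to obtain $f\in\vhs$ with $u(x,t)=f\ast P_t(x)$ on $\urn$. I then set $u_0:=u$ and, for each $j\in\{1,\dots,n\}$, $u_j(x,t):=(R_jf)\ast P_t(x)$; this is well defined because $f$ is a bounded distribution by Lemma \ref{l-vhs}(iii), so $R_jf$ is a tempered distribution by Remark \ref{r-1.7x}(iii) and its Poisson extension is harmonic on $\urn$ by Remark \ref{r-welld}. A direct Fourier-side computation using the conjugate Poisson kernels (cf. \cite[p.\,78]{stein93}) shows that $\textbf{F}:=\{u_0,u_1,\dots,u_n\}$ satisfies \eqref{gcr-equ}. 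For the quantitative bound, I will apply Lemma \ref{l-vhs} component-wise together with the boundedness of $R_j$ on $\vhs$ established in \cite{cw14} (which is available because $p_->\frac{n-1}{n}$): for each $t$ and $j$, $\|u_j(\cdot,t)\|_\vlp\le\|(R_jf)_P^\ast\|_\vlp\sim\|R_jf\|_\vhs\ls\|f\|_\vhs\sim\|u^\ast\|_\vlp$. Using $|\textbf{F}|\le\sum_{j=0}^n|u_j|$ and the quasi-triangle inequality in $\vlp$ will then give the uniform-in-$t$ estimate $\sup_t\||\textbf{F}(\cdot,t)|\|_\vlp\ls\|u^\ast\|_\vlp$.

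For sufficiency, I will set $M:=\sup_{t\in(0,\fz)}\||\textbf{F}(\cdot,t)|\|_\vlp$ and show $\|u^\ast\|_\vlp\ls M$. The classical ingredient I will invoke is the Stein-Weiss fact that $|\textbf{F}|^q$ is subharmonic on $\urn$ whenever $q\ge\frac{n-1}{n}$. Since $p_->\frac{n-1}{n}$, I can fix $q\in(\frac{n-1}{n},p_-)$ for which $\cm$ is bounded on $L^{p(\cdot)/q}(\rn)$; the existence of such a $q$ follows from $p(\cdot)\in\cm\cp(\rn)$ combined with Remark \ref{r-defn}(iii). For a fixed $\vez\in(0,\fz)$, a sub-mean-value argument parallel to the boundedness claim in the proof of Proposition \ref{p-hc}, now applied to the subharmonic $|\textbf{F}|^q$, shows that $|\textbf{F}|$ is bounded on $\{(x,t)\in\urn:\ t\ge\vez\}$. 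This auxiliary boundedness lets me invoke the maximum principle for subharmonic functions to majorize $|\textbf{F}(x,t+\vez)|^q$ by the harmonic function $v_\vez(x,t):=(|\textbf{F}(\cdot,\vez)|^q)\ast P_t(x)$ on $\urn$.

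Combining this domination with the standard pointwise bound $v_\vez^\ast\le C\cm(|\textbf{F}(\cdot,\vez)|^q)$ gives $[u_\vez^\ast]^q\le C\cm(|\textbf{F}(\cdot,\vez)|^q)$ pointwise, where $u_\vez(x,t):=u(x,t+\vez)$. Taking $\vlp$-quasi-norms and using the standard identity $\|g^{1/q}\|_\vlp=\|g\|_{L^{p(\cdot)/q}(\rn)}^{1/q}$ together with the chosen $\cm$-boundedness, I will obtain $\|u_\vez^\ast\|_\vlp\ls\||\textbf{F}(\cdot,\vez)|\|_\vlp\le M$. Finally, since $u_\vez^\ast\nearrow u^\ast$ as $\vez\to 0^+$ (by continuity of $u$ and the opening of the cone), the Fatou property of $\vlp$ will yield $\|u^\ast\|_\vlp\ls M$, completing the proof.

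The hard part will be the sufficiency direction, specifically setting up the maximum-principle majorization: one must first secure the auxiliary half-space boundedness of $|\textbf{F}|$ so that both sides are bounded on $\overline{\urn}$ with matching boundary values, and one must carefully route the final estimate through $L^{p(\cdot)/q}(\rn)$ with $q$ chosen so that Stein-Weiss subharmonicity and $\cm$-boundedness are simultaneously available.
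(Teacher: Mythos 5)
Your argument is correct but diverges from the paper's in both directions, and in both cases your route is arguably the simpler one. For necessity, the paper does not bound the vector $\textbf{F}$ directly: it takes approximants $\{f_k\}\subset H^{p(\cdot)}(\rn)\cap L^{1+r}(\rn)$ from Lemma \ref{l-dense}(ii), forms $\textbf{F}_k$ via the conjugate Poisson kernels $Q_t^{(j)}$, identifies $f_k\ast Q_t^{(j)}=R_j(f_k)\ast P_t$ using the $L^{1+r}$ membership, verifies the Cauchy--Riemann system and the uniform bound for each $\textbf{F}_k$, and finally passes to the limit by a pointwise-convergence claim and Fatou. You skip the density detour by estimating $\|u_j(\cdot,t)\|_\vlp\le\|(R_jf)_P^\ast\|_\vlp\sim\|R_jf\|_\vhs\ls\|f\|_\vhs$ directly from Lemmas \ref{l-vhs} and \ref{l-rtb}; that is valid, though you should justify that $(R_jf)\ast P_t$ is defined through Lemma \ref{l-rtb} together with Lemma \ref{l-vhs}(iii) (so $R_jf$ is a \emph{bounded} distribution), not merely via Remark \ref{r-1.7x}(iii), which gives only a tempered distribution. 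For sufficiency, the paper extracts a weak limit $h$ of a subsequence of $\{|\textbf{F}(\cdot,t)|^\eta\}$ in the reflexive space $L^{p(\cdot)/\eta}(\rn)$, deduces $|\textbf{F}(x,t)|^\eta\le h\ast P_t(x)$, and closes via $\cm(h)$ and the norm duality in Remark \ref{r-vlp}(v). You instead fix $\vez>0$, use $|\textbf{F}(x,t+\vez)|^q\le(|\textbf{F}(\cdot,\vez)|^q\ast P_t)(x)$ to bound $\|u_\vez^\ast\|_\vlp\ls M$ uniformly in $\vez$, and then let $\vez\to 0^+$ via monotone convergence and Fatou; this avoids the weak-compactness step entirely and mirrors Stein's constant-exponent argument. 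Re-deriving the majorization from the maximum principle plus an auxiliary half-space boundedness bound for $|\textbf{F}|$ (instead of quoting Lemma \ref{l-hv}) is fine -- the sub-mean-value computation closes using Remark \ref{r-vlp}(ii) -- although invoking Lemma \ref{l-hv} with Remark \ref{r-hv}(ii) would be shorter. One shared caveat: $q$ must satisfy not only $q\in(\frac{n-1}n,p_-)$ but also that $\cm$ be bounded on $L^{p(\cdot)/q}(\rn)$, i.e.\ $q$ must lie below the $p_0$ appearing in the definition of $\cm\cp(\rn)$ (the paper itself takes $\eta\in[\frac{n-1}n,p_0)$); your phrasing acknowledges this but it should be stated precisely.
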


To prove Proposition \ref{p-hv1}, we first establish a technical lemma with respect to
the least
harmonic majorant of the harmonic vector considered in Proposition \ref{p-hv1}
(see \cite[p.\,80]{StWe71} for the definition of the least harmonic majorant).

\begin{lemma}\label{l-hv}
Let $p(\cdot)\in \cm\cp(\rn)$ satisfy $p_-\in(\frac{n-1}n,\fz)$.
If $\textbf{F}$ satisfies \eqref{gcr-equ} and \eqref{hbf-norm},
then, for any $\eta\in[\frac {n-1}n,p_-)$,
$a\in(0,\fz)$ and $(x,t)\in\urn$,
\begin{equation}\label{hv-esti}
|\textbf{F}(x,t+a)|\le \lf\{\lf(|\textbf{F}(\cdot,a)|^\eta\ast P_t\r)(x)\r\}^{1/\eta},
\end{equation}
where $P_t$ denotes the Poisson kernel as in \eqref{po-ke}.
Moreover, there exists a measurable function $g$ such that
$g(x)=\lim_{a\to0^+}|\textbf{F}(x,a)|$
pointwise almost every $x\in\rn$
and
\begin{equation}\label{hv-esti0}
|\textbf{F}(x,t)|\le \lf\{\lf(g^\eta\ast P_t\r)(x)\r\}^{1/\eta},
\end{equation}
here and hereafter, $\lim_{a\to 0^+}$ means $a\in(0,\fz)$ and $a\to 0$.
\end{lemma}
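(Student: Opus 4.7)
The plan is to combine the Stein--Weiss subharmonicity theorem for harmonic vectors satisfying \eqref{gcr-equ} with a maximum principle argument on $\urn$ to derive \eqref{hv-esti}, and then to pass to boundary values via a weak compactness argument in a variable-exponent Lebesgue space to obtain \eqref{hv-esti0}.

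The key ingredient from elliptic theory is the classical result (see \cite[Chapter~VI]{StWe71}) that, for any harmonic vector $\textbf{F}$ on $\urn$ satisfying \eqref{gcr-equ} and any $\eta\ge (n-1)/n$, the function $|\textbf{F}|^\eta$ is subharmonic on $\urn$; this accounts for the lower bound on $\eta$ in the statement. To prove \eqref{hv-esti}, I would fix $a\in(0,\fz)$ and set $F_a:=|\textbf{F}(\cdot,a)|$. By \eqref{hbf-norm}, $F_a\in\vlp$; since $\eta<p_-$, the quotient $p(\cdot)/\eta$ lies in $\cp(\rn)$ with $(p/\eta)_->1$, so $F_a^\eta\in L^{p(\cdot)/\eta}(\rn)$. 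Using Remark \ref{r-vlp}(iv) together with the rapid decay of $P_t$, the Poisson integral
$$w(x,t):=\lf(F_a^\eta\ast P_t\r)(x),\qquad (x,t)\in\urn,$$
is finite, harmonic on $\urn$, and satisfies $w(\cdot,t)\to F_a^\eta$ almost everywhere as $t\to 0^+$. Setting $v(x,t):=|\textbf{F}(x,t+a)|^\eta$, one sees that $v$ is subharmonic on $\urn$ by the first step, so $v-w$ is subharmonic on $\urn$ with $\limsup_{t\to 0^+}(v-w)\le 0$ a.e. A Phragm\'en--Lindel\"of-type maximum principle on $\urn$, justified by the pointwise bound $|\textbf{F}(x,s)|\ls (1+s^{-n/p_-})\sup_{\tau}\||\textbf{F}(\cdot,\tau)|\|_{\vlp}$ (obtained exactly as in the proof of Proposition \ref{p-hc}) together with the decay of $w$, then forces $v\le w$ on $\urn$, which is \eqref{hv-esti}.

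For the second assertion, \eqref{hbf-norm} yields that $\{F_a^\eta\}_{a>0}$ is uniformly bounded in the reflexive Banach space $L^{p(\cdot)/\eta}(\rn)$, so the Banach--Alaoglu theorem supplies a sequence $a_k\to 0^+$ and $h\in L^{p(\cdot)/\eta}(\rn)$ with $F_{a_k}^\eta\to h$ weakly. Applying \eqref{hv-esti} with $a=a_k$ at the point $(x,t-a_k)$ for $a_k<t$ and letting $k\to\fz$, using the weak convergence together with the continuity $P_{t-a_k}(x-\cdot)\to P_t(x-\cdot)$ in $L^{(p(\cdot)/\eta)^\ast}(\rn)$, I obtain $|\textbf{F}(x,t)|^\eta\le (h\ast P_t)(x)$; setting $g:=h^{1/\eta}$ then gives \eqref{hv-esti0}. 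The a.e.\ identification $g(x)=\lim_{a\to 0^+}|\textbf{F}(x,a)|$ follows from the classical Fatou-type theorem for subharmonic functions majorized by a Poisson integral, the required domination being supplied by \eqref{hv-esti} itself.

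The main obstacle I anticipate is the maximum principle step on the unbounded domain $\urn$: the boundary values of $v-w$ on $\rn\times\{0\}$ are only controlled almost everywhere, and one must carefully exploit the quantitative decay of $v$ and $w$ both as $|x|\to\fz$ and as $t\to\fz$ to close the argument. The reflexivity of $L^{p(\cdot)/\eta}(\rn)$, which holds precisely because $\eta<p_-$, is what makes the subsequent weak-compactness step feasible in this variable-exponent framework.
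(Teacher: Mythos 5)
Your overall shape---Stein--Weiss subharmonicity of $|\textbf{F}|^\eta$ followed by a Poisson-majorant comparison---matches the paper's, and your use of reflexivity of $L^{p(\cdot)/\eta}(\rn)$ to extract a weak limit is similar in spirit. But the crucial middle step is handled very differently and, as written, your version has a genuine gap. The paper does \emph{not} argue via a direct maximum principle. It first establishes, for $q\in(1,p_-/\eta)$ and each $a\in[0,\infty)$, the uniform weighted bound
\begin{equation*}
\sup_{t\in(0,\infty)}\int_\rn\frac{|\textbf{F}(x,a+t)|^{\eta q}}{(|x|+1+t)^{n+1}}\,dx<\infty,
\end{equation*}
obtained from \eqref{hbf-norm} by splitting the integral over $\{|\textbf{F}|\ge1\}$ and its complement and invoking Remark \ref{r-vlp}(ii). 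This is precisely the hypothesis of Nualtaranee's least-harmonic-majorant theorem \cite[Theorem 8]{Nua7312}, which, applied to the subharmonic function $|\textbf{F}_a|^\eta$ (with $\textbf{F}_a(x,t):=\textbf{F}(x,t+a)$), yields \emph{in one stroke} both the a.e.\ boundary limit $g_a=\lim_{s\to0^+}|\textbf{F}_a(\cdot,s)|$ and the majorization $|\textbf{F}_a(x,t)|^\eta\le(g_a^\eta\ast P_t)(x)$. Taking $a>0$ (where $g_a=|\textbf{F}(\cdot,a)|$ by continuity) gives \eqref{hv-esti}; taking $a=0$ gives \eqref{hv-esti0}.

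Your proposed Phragm\'en--Lindel\"of step is exactly where the nontrivial content lies, and you have not closed it. You only control $\limsup_{t\to0^+}(v-w)(x,t)\le0$ for \emph{almost every} $x\in\rn$, and on the unbounded domain $\urn$ an a.e.\ boundary inequality together with mere upper-boundedness of $v$ is not sufficient for a maximum principle; one needs precisely the kind of quantitative weighted integrability that the $K$-bound above encodes, and supplying the missing estimate essentially amounts to re-deriving Nualtaranee's theorem. You should either cite \cite[Theorem 8]{Nua7312} after verifying the $K$-bound, or carry out the Phragm\'en--Lindel\"of argument in full. In addition, once \eqref{hv-esti} is in hand your passage to \eqref{hv-esti0} via weak compactness is an acceptable alternative to the paper's ``take $a=0$'' shortcut, but your final identification of $g$ with the a.e.\ boundary limit of $|\textbf{F}|$ is not yet justified: a weak limit $h$ of $|\textbf{F}(\cdot,a_k)|^\eta$ need not coincide a.e.\ with the pointwise limit of $|\textbf{F}(\cdot,a)|^\eta$ without further argument (weak and pointwise limits do not automatically agree), whereas Nualtaranee's theorem furnishes the a.e.\ identification directly.
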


\begin{proof}
To prove this lemma, for all $x\in\rn$, $a\in[0,\fz)$ and $t\in(0,\fz)$, let
$\hbf_a(x,t):=\hbf(x,t+a)$ and,
for any $q\in(1,\frac{p_-}\eta)$, let
$$K(|\hbf_a(\cdot,t)|^{\eta q},t):=\int_\rn\frac{|\hbf_a(x,t)|^{\eta q}}{(|x|+1+t)^{n+1}}\,dx.$$

We claim that $K(|\hbf_a|^{\eta q},\cdot)$ is bounded on $(0,\fz)$.
To see this, for any $t\in(0,\fz)$, write
\begin{equation}\label{hv-esti1}
K(|\hbf_a(\cdot,t)|^{\eta q},t)=\int_{E_t}\frac{|\hbf(x,t+a)|^{\eta q}}{(|x|+1+t)^{n+1}}\,dx
+\int_{(E_t)^\complement}\cdots=:{\rm I}+{\rm II},
\end{equation}
where $E_t:=\{x\in\rn:\ |\hbf(x,t+a)|\ge1\}$.

Obviously, we have
\begin{eqnarray}\label{hv-esti2}
{\rm II}\le \int_{(E_t)^\complement}\frac1{(|x|+1+t)^{n+1}}\,dx
\ls\frac1{1+t}.
\end{eqnarray}
For I, by Remark \ref{r-vlp}(ii) and the fact that $\eta q<p_-$,
we find that
\begin{eqnarray*}
{\rm I}
&&\le \frac1{(1+t)^{n+1}}\int_{E_t}|\hbf(x,t+a)|^{p(x)}\,dx\\
&&\le \frac1{(1+t)^{n+1}}\int_{\rn}\lf[\frac{|\hbf(x,t+a)|}
{\||\hbf(\cdot,t+a)|\|_{\vlp}}\r]^{p(x)}
\||\hbf(\cdot,t+a)|\|_{\vlp}^{p(x)}\,dx\\
&&\le \frac1{(1+t)^{n+1}}\max\lf\{\||\hbf(\cdot,t+a)|\|_{\vlp}^{p_-},\,
\||\hbf(\cdot,t+a)|\|_{\vlp}^{p_+}\r\}\\
&&\le \frac1{(1+t)^{n+1}}\max\lf\{\sup_{t\in(0,\fz)}
\|\hbf(\cdot,t)\|_{\vlp}^{p_-},\,
\sup_{t\in(0,\fz)}\|\hbf(\cdot,t)\|_{\vlp}^{p_+}\r\}.
\end{eqnarray*}
This, together with \eqref{hv-esti1} and \eqref{hv-esti2}, implies that,
for all $t\in(0,\fz)$,
\begin{equation}\label{2.11x}
K(|\hbf_a(\cdot,t)|^{\eta q},t)\ls1
\end{equation} with the implicit positive constant
independent of $t$.
Therefore, the above claim holds true.

Since $\hbf$ satisfy \eqref{gcr-equ},
it follows, from \cite[p.\,234, Theorem 4.14]{StWe71},
that, for any $\eta\in[\frac{n-1}n,p_-)$, $|\hbf_a|^\eta$ is subharmonic
(see \cite[p.\,76]{StWe71} for the definition of subharmonic),
with $a\in[0,\fz)$.
From this, the above claim and \cite[Theorem 8]{Nua7312}, we further deduce that
the least harmonic majorant of $|\hbf_a|^\eta$ in $\urn$
exists and is given by
$g^\eta\ast P_t$,
where $g$ is a measurable function and
$g(x)=\lim_{t\to0^+}|\hbf_a(x,t)|$ pointwise almost every $x\in\rn$.
Thus, by taking $a=0$, we then obtain \eqref{hv-esti0}; moreover, 
if $a\in(0,\fz)$, this, combined with $g=|\hbf_a(\cdot,0)|=|\hbf(\cdot,a)|$,
implies that \eqref{hv-esti} holds true.
This finishes the proof of Lemma \ref{l-hv}.
\end{proof}

\begin{remark}\label{r-hv}
(i) We point out that, in the proof of Lemma \ref{l-hv},
the condition $p_-\in(\frac{n-1}n,\fz)$ is required merely because that
we need the fact that $|\hbf|^\eta$ is subharmonic on $\urn$ for any
$\eta\in [\frac{n-1}n,p_-)$. Thus, if there exists some $\eta_0\in(0,\frac{n-1}n)$
such that $|\hbf|^{\eta_0}$ is subharmonic on $\urn$, then, for all
$p_-\in(\eta_0,\fz)$, the conclusion of Lemma \ref{l-hv} still holds true.
Moreover, if, for all $\eta\in(0,\fz)$, $|\hbf|^{\eta}$ is subharmonic on $\urn$,
then, for every $p(\cdot)\in \cm\cp(\rn)$, by taking $\eta$ sufficiently
small, we always have $p_->\eta$ and, therefore,
Lemma \ref{l-hv} holds true for every
$p(\cdot)\in \cm\cp(\rn)$.

(ii) In Lemma~\ref{l-hv},
if $\hbf$ satisfies \eqref{gcr-equ} and
\begin{equation}\label{hbf-K}
\sup_{t\in(0,\fz)}
\int_\rn\frac{|\hbf(x,a+t)|^{\eta q}}{(|x|+1+t)^{n+1}}\,dx<\fz,
\quad q\in(1,p_-/\eta)
\end{equation}
for each $a\in[0,\fz)$,
instead of \eqref{hbf-norm},
then we have the same conclusion.
For example, the condition that
\begin{equation*}
 \sup_{t\in(0,\fz)}\||\hbf(\cdot,t)|\|_{L^r(\rn)}<\fz
 \quad (r\in[p_-,\fz))
\end{equation*}
implies \eqref{hbf-K}.

(iii) When $p(\cdot)\equiv p\in(\frac{n-1}n,\fz)$ is a constant, Lemma \ref{l-hv}
was proved in \cite[p.\,122]{stein93} (see also \cite[Lemma 2.12]{ccyy15}).
\end{remark}
To prove Proposition \ref{p-hv1}, we also need a density result as follows.

\begin{lemma}\label{l-dense}
Let $p(\cdot)\in\cm\cp(\rn)$. Then

{\rm(i)} for all $f\in\vhs$, $f$ is restricted at infinity, namely,
for any $r\in[p_+,\fz)$, it holds true that
$f\ast \Phi\in L^r(\rn)$ for all $\Phi\in\cs(\rn)$;

{\rm(ii)} for all $r\in[p_+,\fz)$, $\vhs\cap L^{1+r}(\rn)$
is dense in $\vhs$. Particularly, if $p_+\in(0,1]$, then $\vhs\cap L^2(\rn)$
is dense in $\vhs$.
\end{lemma}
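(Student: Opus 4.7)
For part (i), my plan is to show that $f\ast\Phi\in L^\infty(\rn)\cap\vlp$ and then apply the elementary inclusion $L^\infty(\rn)\cap\vlp\subset L^r(\rn)$ valid for every $r\in[p_+,\fz)$. The $L^\infty$ bound is immediate from Lemma \ref{l-vhs}, which says every $f\in\vhs$ is a bounded distribution. The $\vlp$ bound follows by choosing $c_\Phi>0$ so that $c_\Phi\Phi\in\cf_N(\rn)$ (possible because $\Phi$ is Schwartz), yielding the pointwise estimate $|f\ast\Phi(x)|\le c_\Phi^{-1}f_{N,+}^\ast(x)$, hence $\|f\ast\Phi\|_\vlp\ls\|f\|_\vhs$. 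The inclusion itself rests on the pointwise identity, for a.e.\ $x$ and $r\ge p_+$,
\begin{equation*}
|g(x)|^r=|g(x)|^{r-p(x)}|g(x)|^{p(x)}\le\max\{\|g\|_{L^\infty}^{r-p_-},1\}\,|g(x)|^{p(x)},
\end{equation*}
combined with Remark \ref{r-vlp}(ii), which ensures $\int_\rn|g(x)|^{p(x)}\,dx<\fz$.

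For part (ii), I would approximate $f\in\vhs$ by its Poisson integrals $f_\vez:=f\ast P_\vez$, $\vez\in(0,\fz)$, and establish three claims: (a) $f_\vez\in\vhs$ with $\|f_\vez\|_\vhs\ls\|f\|_\vhs$ uniformly in $\vez$; (b) $f_\vez\in L^{1+r}(\rn)$ for every $r\in[p_+,\fz)$; and (c) $\|f-f_\vez\|_\vhs\to0$ as $\vez\to0^+$. For (a), I would invoke Proposition \ref{p-hc} with the shifted harmonic function $u_\vez(x,t):=f\ast P_{t+\vez}(x)=f_\vez\ast P_t(x)$, whose non-tangential maximal function is pointwise dominated by $f_P^\ast\in\vlp$. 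For (b), I would use the decomposition \eqref{poisson}: writing $f_\vez=(f\ast(\psi^{(1)})_\vez)\ast h_\vez+f\ast(\psi^{(2)})_\vez$, each $f\ast(\psi^{(i)})_\vez$ lies in $L^\infty(\rn)\cap L^{1+r}(\rn)$ by (i), and convolution with $h_\vez\in L^1(\rn)$ preserves $L^{1+r}(\rn)$ by Young's inequality. The special case $p_+\in(0,1]$ follows by taking $r=1$, so that $1+r=2$.

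The main obstacle is (c). The plan is to exploit the semigroup identity $P_\vez\ast P_t=P_{t+\vez}$, whereby
\begin{equation*}
(f-f_\vez)_P^\ast(x)=\sup_{t\in(0,\fz),\,|y-x|<t}|f\ast P_t(y)-f\ast P_{t+\vez}(y)|\le 2f_P^\ast(x)\in\vlp,
\end{equation*}
so that we have a fixed $\vlp$-majorant. The hard step is to prove that this supremum tends to $0$ pointwise for a.e.\ $x\in\rn$ as $\vez\to 0^+$; I would handle it by splitting at a level $t=\delta$: on $\{t\ge\delta\}$, uniform continuity of the harmonic extension in the time variable (combined with Poisson-kernel gradient bounds) forces the difference to vanish as $\vez\to 0^+$, while on $\{t<\delta\}$ the contribution is dominated by a truncated nontangential maximal function that can be made arbitrarily small by shrinking $\delta$ at any $x$ where $f$ admits a nontangential limit. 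Once pointwise a.e.\ convergence is secured, the Lebesgue dominated convergence theorem in $\vlp$, together with the maximal-function characterization of Lemma \ref{l-vhs}, delivers $\|f-f_\vez\|_\vhs\to 0$ and completes the proof.
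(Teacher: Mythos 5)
Your proof of (i) and of parts (a)--(b) of (ii) runs along essentially the same lines as the paper's: (i) combines the pointwise domination $|f\ast\Phi|\le c_\Phi^{-1}f_{N,+}^\ast$ with the bounded-distribution property to get $f\ast\Phi\in L^\infty(\rn)\cap\vlp$, and then interpolates; (ii)(b) uses the decomposition \eqref{poisson} of $P_t$ together with Young's/Minkowski's inequality, exactly as in the paper. One cosmetic simplification for (ii)(a): you do not need Proposition \ref{p-hc}; since $f_\vez\ast P_t=f\ast P_{t+\vez}$ and $|y-x|<t<t+\vez$ places $(y,t+\vez)$ inside the same cone, one has the pointwise bound $(f_\vez)_P^\ast\le f_P^\ast$ directly, whence $\|f_\vez\|_\vhs\ls\|f\|_\vhs$ by Lemma \ref{l-vhs}.

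The genuinely different move is in (ii)(c). The paper does not prove the convergence $f\ast P_\vez\to f$ in $\vhs$ but cites the proof of \cite[Proposition 4.2]{cw14}. You instead attempt a self-contained argument via a fixed $\vlp$-majorant $2f_P^\ast$ and pointwise a.e.\ convergence of $(f-f_\vez)_P^\ast$. The majorant step is correct, and dominated convergence does hold in $\vlp$ since $0<p_-\le p_+<\fz$. However, the pointwise step has a real gap: you write ``at any $x$ where $f$ admits a nontangential limit'', but for $f\in\vhs$ with $p_-\le1$, $f$ is only a tempered distribution and the a.e.\ existence of nontangential limits of the harmonic extension $u(y,t)=f\ast P_t(y)$ is not free. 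What saves the argument is that $u^\ast=f_P^\ast\in\vlp$ with $p_+<\fz$ forces $u^\ast<\fz$ a.e., so $u$ is nontangentially bounded a.e., and then Calder\'on's local Fatou theorem (see \cite[Chapter~VII]{stein70}) yields nontangential limits a.e. This is a substantial theorem that your sketch invokes implicitly and must be stated; without it the split at level $t=\delta$ does not close. With that ingredient acknowledged (and the interior-estimate control for the region $t\ge\delta$ carried out with a wider-aperture maximal function, which is still finite a.e.), your route is viable, but it is considerably heavier than the paper's one-line citation of \cite[Proposition 4.2]{cw14}.
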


\begin{proof}
We first prove (i).
We may assume that $f\ne0$.
By the definition of the non-tangential maximal function in \eqref{ntmf},
we see that, for all $x\in\rn$
and $y\in B(x,1)$,
\begin{equation}\label{rtc-1}
|(f\ast \Phi)(x)|\le M_{\Phi,\triangledown}^\ast(f)(y).
\end{equation}
If $x\in\rn$ satisfies that
$|(f\ast \Phi)(x)|>\|M_{\Phi,\triangledown}^\ast(f)\|_{\vlp}$,
then, by integrating on \eqref{rtc-1} over $B(x,1)$,
Remark \ref{r-vlp}(ii) and Lemma \ref{l-vhs1},
we find that
\begin{eqnarray*}
|f\ast \Phi(x)|^{p_-}
&&\le
\frac1{|B(x,1)|}
\int_{B(x,1)}
 \lf[
 \frac{M_{\Phi,\triangledown}^\ast(f)(y)}
     {\lf\|M_{\Phi,\triangledown}^\ast(f)\r\|_{\vlp}}
 \r]^{p(y)}
 \lf\|M_{\Phi,\triangledown}^\ast(f)\r\|_{\vlp}^{p_-}
\,dy\\
&&\ls
\lf\|M_{\Phi,\triangledown}^\ast(f)\r\|_{\vlp}^{p_-}
\ls \|f\|_{\vhs}^{p_-}<\fz.
\end{eqnarray*}
Thus, $f\ast \Phi\in L^\fz(\rn)$ and
\begin{eqnarray}\label{rtc-2}
\|f\ast\Phi\|_{L^\fz(\rn)}\ls \|f\|_{\vhs}.
\end{eqnarray}
Moreover, from \eqref{rtc-1} and Lemma \ref{l-vhs1},
we deduce that
\begin{eqnarray}\label{rtc-1x}
\|f\ast \Phi\|_{\vlp}\le\|M_{\Phi,\triangledown}^\ast(f)\|_{\vlp}\ls\|f\|_{\vhs}.
\end{eqnarray}
Let
$$g_1:=f\ast\Phi\chi_{\{x\in\rn:\ |f\ast\Phi(x)|\le1\}}\quad{\rm and}\quad
g_2:=f\ast\Phi\chi_{\{x\in\rn:\ |f\ast\Phi(x)|>1\}}.$$
Then $f\ast \Phi=g_1+g_2$ and, from \eqref{rtc-1x}, we easily deduce
that $g_1\in L^{p_+}(\rn)$ and $g_2\in L^{p_-}(\rn)$.
By this and \eqref{rtc-2}, we further conclude that, for any $r\in[p_+,\fz)$,
$f\ast\Phi\in L^r(\rn)$ and hence $f$ is a distribution restricted at infinity.

Next, we show (ii). Let $f\in \vhs$. Then $f$ is a bounded distribution due to
Lemma \ref{l-vhs} and hence, for any $t\in(0,\fz)$,
$f\ast P_t$ is well defined by Remark \ref{r-vhs}.
By the proof of \cite[Proposition 4.2]{cw14}, we know that
$\{f\ast P_t\}_{t\in(0,\fz)}$ converges to $f$ in $\vhs$.
Thus, to prove (ii), we only need to show that, for any $t\in(0,\fz)$
and $r\in[p_+,\fz)$, $f\ast P_t\in L^{1+r}(\rn)$. To this end,
let $\psi^{(1)}$, $\psi^{(2)}\in\cs(\rn)$ and $h\in L^1(\rn)$
be as in Remark \ref{r-welld} such that \eqref{poisson}
holds true. Then, for all $t\in(0,\fz)$,
$f\ast P_t=f\ast (\psi^{(1)})_t\ast h_t+f\ast(\psi^{(2)})_t.$
By (i), we see that, for any $t\in(0,\fz)$ and $r\in [p_+,\fz)$,
$f\ast (\psi^{(l)})_t\in L^{1+r}(\rn)$ for $l\in\{1,2\}$,
which, together with the Minkowski inequality for
both norms and integrals, implies that
\begin{eqnarray*}
\|f\ast P_t\|_{L^{1+r}(\rn)}
&&\le\lf\|f\ast\lf(\psi^{(1)}\r)_t\ast h_t\r\|_{L^{1+r}(\rn)}+
\lf\|f\ast\lf(\psi^{(2)}\r)_t\r\|_{L^{1+r}(\rn)}\\
&&\le\lf\|f\ast\lf(\psi^{(1)}\r)_t\r\|_{L^{1+r}(\rn)}\|h_t\|_{L^1(\rn)}
+\lf\|f\ast\lf(\psi^{(2)}\r)_t\r\|_{L^{1+r}(\rn)}<\fz.
\end{eqnarray*}
Therefore, for any $t\in(0,\fz)$, $f\ast P_t\in L^{1+r}(\rn)$, which completes
the proof of (ii) and hence Lemma \ref{l-dense}.
\end{proof}

\begin{remark}
(i) In \cite[Proposition 4.2]{cw14}, Cruz-Uribe and Wang
proved that $\vhs\cap L_\loc^1(\rn)$ is dense in $\vhs$.

(ii) We point out that, under the assumption that
$p(\cdot)$ satisfies the globally H\"older continuous condition as in
Remark \ref{r-defn}, by using the Calder\'on-Zygmund decomposition,
Nakai and Sawano \cite[p.\,3693]{ns12} also showed that
$\vhs\cap L^{1+p_+}(\rn)$ is dense in $\vhs$.
\end{remark}

In \cite[Theorem 8.4]{cw14}, Cruz-Uribe and Wang studied the boundedness
of the convolution type Calder\'on-Zygmund singular integrals with sufficient
regularity on $\vhs$. As a special case of \cite[Theorem 8.4]{cw14},
we immediately obtain the following conclusion, the details being omitted.

\begin{lemma}\label{l-rtb}
Let $p(\cdot)\in \cm\cp(\rn)$. Then, for any $j\in\{1,\dots,n\}$,
the Riesz transform $R_j$ is bounded on $\vhs$.
\end{lemma}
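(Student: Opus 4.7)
The plan is to deduce the lemma directly from \cite[Theorem 8.4]{cw14}, which asserts that a convolution-type Calder\'on-Zygmund operator $T$ is bounded on $\vhs$ provided its kernel $K$ satisfies the standard size and regularity estimates up to a sufficiently high order (depending on how small $p_-$ is) and $T$ extends to a bounded linear operator on $L^2(\rn)$. Thus, my task reduces to verifying these two hypotheses for each $R_j$ with $j\in\{1,\dots,n\}$.

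For the first hypothesis, I will observe that the convolution kernel $K_j(x):=C_{(n)} x_j/|x|^{n+1}$ of $R_j$ is $C^\infty$ on $\rn\setminus\{0\}$, homogeneous of degree $-n$, and has mean value zero on every sphere centered at the origin. A direct computation then yields $|\partial^\alpha K_j(x)|\ls |x|^{-n-|\alpha|}$ for every multi-index $\alpha\in\zz_+^n$ and every $x\in\rn\setminus\{0\}$. Consequently, $K_j$ meets any regularity requirement imposed by \cite[Theorem 8.4]{cw14} to arbitrarily high order, irrespective of the value of $p_-$. For the second hypothesis, the classical Plancherel-based argument shows that $R_j$ is a Fourier multiplier whose symbol $-i\xi_j/|\xi|$ is uniformly bounded, so that $R_j$ is bounded on $L^2(\rn)$.

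With both hypotheses in hand, \cite[Theorem 8.4]{cw14} applies and produces a positive constant $C$, independent of $f$, such that $\|R_j(f)\|_{\vhs}\le C\|f\|_{\vhs}$ for every $f\in\vhs$. The only point that requires care is the correspondence between the regularity index demanded in \cite[Theorem 8.4]{cw14} and the quantity $p_-$ (which in turn determines the number of vanishing moments and the order of smoothness needed on the kernel); however, the infinite smoothness of $K_j$ off the origin makes this matching automatic, so no genuine obstacle arises. In other words, the entire argument is a routine verification that the Riesz kernel fits into the framework of \cite{cw14}, and no new ingredient beyond that theorem is needed.
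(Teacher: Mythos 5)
Your proposal is correct and takes essentially the same approach as the paper, which likewise obtains this lemma as an immediate special case of \cite[Theorem 8.4]{cw14} and explicitly omits the details. Your write-up simply supplies the routine verification (infinite smoothness and homogeneity of the kernel $K_j$, boundedness of the multiplier $-i\xi_j/|\xi|$ giving $L^2$-boundedness) that the paper leaves to the reader.
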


Now we are ready to prove Proposition \ref{p-hv1}.

\begin{proof}[Proof of Proposition \ref{p-hv1}]
We first prove the sufficiency. Let $u$ be a harmonic function on $\urn$.
Assume that $\hbf :=\{u_0,u_1,\dots,u_n\}$ satisfies \eqref{gcr-equ},
\eqref{hbf-norm} and $u_0:=u$.
Since, for any $\eta\in [\frac{n-1}n, p_0)$
and all $t\in(0,\fz)$,
\begin{equation*}
\lf\||\hbf(\cdot,t)|^\eta\r\|_{L^{\frac{p(\cdot)}\eta}(\rn)}^{1/\eta}
=\lf\||\hbf(\cdot,t)|\r\|_{\vlp}
\le \sup_{t\in(0,\fz)}\lf\||\hbf(\cdot,t)|\r\|_{\vlp}<\fz,
\end{equation*}
it follows that $\{|\hbf(\cdot,t)|^\eta\}_{t\in(0,\fz)}$ is a bounded set of
$L^{\frac {p(\cdot)}\eta}(\rn)$. From this, the reflexivity of the Banach space
$L^{\frac {p(\cdot)}\eta}(\rn)$ (see \cite[Corollary 2.81]{cfbook}
and also \cite[Theorem 3.4.7]{dhr11}) and the weak compactness of reflexive
Banach spaces (see \cite[p.\,126, Theorem 1]{Yo78}), we deduce that there exist a
subsequence $\{|\hbf(\cdot,t_k)|^\eta\}_{k\in\nn}$
and an element $h\in L^{\frac {p(\cdot)}\eta}(\rn)$ such that
$t_k\to 0^+$ as $k\to\fz$ and $\{|\hbf(\cdot,t_k)|^\eta\}_{k\in\nn}$
converges weakly to $h$ in $L^{\frac {p(\cdot)}\eta}(\rn)$. Thus,
for any $g\in L^{(\frac {p(\cdot)}\eta)^\ast}(\rn)$, we have
\begin{equation}\label{hv1-esti}
\lim_{k\to\fz}\int_\rn |\hbf(y,t_k)|^\eta g(y)\,dy
=\int_\rn h(y)g(y)\,dy,
\end{equation}
where $(\frac{p(\cdot)}{\eta})^\ast$ denotes the dual variable exponent
of $\frac{p(\cdot)}{\eta}$. From this, it is easy to deduce that $h$ is non-negative
almost everywhere in $\rn$.
For any given $(x,t)\in\urn$, we let $g_0(y):=P_t(x-y)$ for all $y\in\rn$.
Then $g_0\in L^{(\frac {p(\cdot)}\eta)^\ast}(\rn)$ and, by \eqref{hv1-esti}, we obtain
\begin{equation*}
\lim_{k\to\fz}|\hbf(\cdot,t_k)|^\eta\ast P_t(x)=h\ast P_t(x),
\end{equation*}
which, combined with the continuity of $|\hbf|$
and \eqref{hv-esti}, implies that, for any given $(x,t)\in\urn$,
\begin{eqnarray*}
|\hbf(x,t)|^\eta=\lim_{k\to\fz}|\hbf(x,t_k+t)|^\eta
\le \lim_{k\to\fz}|\hbf(x,t_k)|^\eta\ast P_t(x)=h\ast P_t(x).
\end{eqnarray*}
By this, the fact that $h\ast P_t(x)\ls \cm(h)(x)$ for all $(x,t)\in\urn$,
$p(\cdot)\in\cm\cp(\rn)$ together with Remark \ref{r-defn}(iii), (iv) and (v) of Remark \ref{r-vlp},
we conclude that
\begin{eqnarray}\label{2.17x}
\|u^\ast\|_{\vlp}
&&\le \lf\|(|\hbf|^\eta)^\ast\r\|_{L^{\frac {p(\cdot)}\eta}(\rn)}^{1/\eta}
\ls \lf\|\cm(h)\r\|_{L^{\frac {p(\cdot)}\eta}(\rn)}^{1/\eta}
\ls \lf\|h\r\|_{L^{\frac {p(\cdot)}\eta}(\rn)}^{1/\eta}\\
&&\sim \sup_{\|g\|_{L^{({p(\cdot)}/\eta)^\ast}(\rn)}\le 1}
\lf|\int_\rn h(y)g(y)\,dy\r|^{1/\eta}\noz\\
&&\sim \sup_{\|g\|_{L^{({p(\cdot)}/\eta)^\ast}(\rn)}\le 1}
\lim_{k\to\fz}\lf|\int_\rn |\hbf(y,t_k)|^\eta g(y)\,dy\r|^{1/\eta}\noz\\
&&\ls \lim_{k\to\fz}\||\hbf(\cdot,t_k)|^\eta
\|_{L^{\frac{p(\cdot)}{\eta}}(\rn)}^{1/\eta}
\ls\sup_{t\in(0,\fz)}\||\hbf(\cdot,t)|\|_{\vlp}<\fz.\noz
\end{eqnarray}
This implies that $u^\ast\in \vlp$ and finishes the proof of
sufficiency.

Conversely, we show the necessity. Let $u$ be a harmonic function on $\urn$
and $u^\ast\in\vlp$.
Then, by Proposition \ref{p-hc}, there exists $f\in \vhs$ such that
$u(x,t)=f\ast P_t(x)$ for all $(x,t)\in\urn$
and
\begin{equation}\label{hv1-esti0}
\|f\|_{\vhs}\ls\|u^\ast\|_{\vlp}.
\end{equation}
Since $\vhs\cap L^{1+r}(\rn)$ with $r\in[p_+,\fz)$ is dense in $\vhs$
(see Lemma \ref{l-dense}), it follows that there exist
$\{f_k\}_{k\in\nn}\st(\vhs\cap L^{1+r}(\rn))$ such that $f_k\to f$ in
$\vhs$ as $k\to\fz$ and
\begin{equation}\label{hv1-esti1}
\|f_k\|_{\vhs}\ls \|f\|_{\vhs}.
\end{equation}

For each $k\in\nn$, $j\in\{1,\,\dots,\,n\}$ and $(x,t)\in\urn$, let
$$u^{(0)}_k(x,t):=(f_k\ast P_t)(x)\quad
{\rm and}\quad u^{(j)}_k(x,t):=(f_k\ast Q_t^{(j)})(x),$$
where $P_t$ is the Poisson kernel as in \eqref{po-ke} and $Q_t^{(j)}$ the
\emph{$j$-th conjugate Poisson kernel} defined by setting, for all $x\in\rn$,
$$Q_t^{(j)}(x):=\frac{\Gamma([n+1]/2)}{\pi^{(n+1)/2}}\frac {x_j}{(t^2+|x|^2)^{(n+1)/2}}.$$
From \cite[p.\,236, Theorem 3.17]{StWe71} and the fact that
$\{f_k\}_{k\in\nn}\st L^{1+r}(\rn)$, we deduce that the harmonic vector
$\hbf_k:=\{u_k^{(0)},u_k^{(1)},\dots,u_k^{(n)}\}$ satisfies the generalized Cauchy-Riemann equation
\eqref{gcr-equ}. Moreover, \cite[p.\,65, Theorem 3 and p.\,78, Item 4.4]{stein70}
implies that, for each $j\in\{1,\dots,n\}$ and all
$(x,t)\in\urn$, $(f_k\ast Q_t^{(j)})(x)=(R_j(f_k)\ast P_t)(x)$,
due to the fact that
$\{f_k\}_{k\in\nn}\st L^{1+r}(\rn)$.
From this, Lemmas \ref{l-vhs} and \ref{l-rtb}, and \eqref{hv1-esti1}, we deduce that,
for any $k\in\nn$ and $j\in\{1,\dots,n\}$,
\begin{eqnarray}\label{hv1-esti2}
\sup_{t\in(0,\fz)}\lf\|u_k^{(j)}(\cdot,t)\r\|_{\vlp}\ls\|R_j(f_k)\|_{\vhs}
\ls\|f_k\|_{\vhs}\ls\|f\|_{\vhs}
\end{eqnarray}
and, similarly, we also have
\begin{equation*}
\sup_{t\in(0,\fz)}\lf\|u_k^{(0)}(\cdot,t)\r\|_{\vlp}
\ls\|f_k\|_{\vhs}\ls\|f\|_{\vhs}.
\end{equation*}
By this, \eqref{hv1-esti0} and \eqref{hv1-esti2}, we find that
\begin{equation}\label{hv1-esti3-x}
\sup_{t\in(0,\fz)}\||\hbf_k(\cdot,t)|\|_{\vlp}\ls \|f\|_{\vhs}
\ls\|u^\ast\|_{\vlp}.
\end{equation}

Next, we claim that there exists $\{k_i\}_{i\in\nn}\st\nn$ such that
$k_i\to\fz$ as $i\to\fz$ and, for all $(x,t)\in\urn$,
\begin{equation}\label{hv1-esti3}
\lim_{i\to\fz}f_{k_i}\ast P_t(x)=f\ast P_t(x),\quad
\lim_{i\to\fz}R_j(f_{k_i})\ast P_t(x)=R_j(f)\ast P_t(x).
\end{equation}
Indeed, since $\{f_k\}_{k\in\nn}$ converges to $f$ in $\vhs$,
it follows, from Lemma \ref{l-vhs}, that
\begin{equation*}
\lim_{k\to\fz}\|f_k\ast P_t-f\ast P_t\|_{\vlp}\ls\lim_{k\to\fz}\|f_k-f\|_{\vhs}=0
\end{equation*}
and, from Lemmas \ref{l-vhs} and \ref{l-rtb}, that,
for any $j\in\{1,\dots,n\}$,
\begin{eqnarray*}
&&\lim_{k\to\fz}\|R_j(f_k)\ast P_t-R_j(f)\ast P_t\|_{\vlp}\\
&&\hs\ls\lim_{k\to\fz}\|R_j(f_k-f)\|_{\vhs}
\ls\lim_{k\to\fz}\|f_k-f\|_{\vhs}=0.
\end{eqnarray*}
By this, \cite[Proposition 2.67]{cfbook} and Remark \ref{r-welld}, we further
conclude that there exists $\{k_i\}_{i\in\nn}\st\nn$ such that
$k_i\to\fz$ as $i\to\fz$ and
\eqref{hv1-esti3} holds true for all $(x,t)\in\urn$. Therefore, the above claim holds
true.

On the other hand, by Lemma \ref{l-rtb}, we know that $\{R_j(f)\}_{j=1}^n\st\vhs$,
which, together with Lemma \ref{l-vhs}, implies that $\{R_j(f)\}_{j=1}^n$
are bounded distributions.
Thus, $\{R_j(f)\ast P_t\}_{j=1}^n$ are harmonic due
to Remark \ref{r-welld} and hence
$\hbf:=\{f\ast P_t,R_1(f)\ast P_t,\dots,R_n(f)\ast P_t\}$ satisfies the generalized
Cauchy-Riemann equation \eqref{gcr-equ} (see \cite[p.\,122]{stein93}).

Finally, by the above claim, the Fatou lemma (see \cite[Theorem 2.61]{cfbook})
and \eqref{hv1-esti3-x}, we conclude that
\begin{eqnarray*}
\sup_{t\in(0,\fz)}\||\hbf(\cdot,t)|\|_{\vlp}
&&=\sup_{t\in(0,\fz)}\lf\|\lim_{i\to\fz}|\hbf_{k_i}(\cdot,t)|\r\|_{\vlp}\\
&&\le \sup_{t\in(0,\fz)}\liminf_{i\to\fz}\||\hbf_{k_i}(\cdot,t)|\|_{\vlp}
\ls \|u^\ast\|_{\vlp}.
\end{eqnarray*}
This finishes the proof of Proposition \ref{p-hv1}.
\end{proof}

The following lemma can be proved by an argument similar to that used in the
proof of \cite[Theorem~3.5]{ns14} (see also \cite[Corollary 4.20]{zsy15}),
the details being omitted.
\begin{lemma}\label{l-p}
Let $p(\cdot)\in \cm\cp(\rn)$ and $p_-\in[1,\fz)$.
Then $\vhs\subset\vlp$ and there exists a positive constant $C$ such that,
for all $f\in\vhs$,
$$\|f\|_{\vlp}\le C\|f\|_{\vhs}.$$
\end{lemma}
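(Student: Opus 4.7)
The plan is to realize $f\in\vhs$ as the almost-everywhere boundary value of its Poisson integral on $\urn$ and then compare its $\vlp$-norm to that of the non-tangential maximal function of the extension. Concretely, I would set $u(x,t):=(f\ast P_t)(x)$ for all $(x,t)\in\urn$; by Lemma \ref{l-vhs} and Remark \ref{r-welld}, $f$ is a bounded distribution, $u$ is harmonic on $\urn$, and the non-tangential maximal function $u^\ast=f_P^\ast$ satisfies $\|u^\ast\|_\vlp\sim\|f\|_\vhs$. In particular $u^\ast(x)<\fz$ for almost every $x\in\rn$, so I would decompose $\rn$ (up to a null set) as the countable union of the level sets $E_N:=\{x\in\rn:\ u^\ast(x)\le N\}$, on each of which $u$ is uniformly bounded in every non-tangential cone. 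The classical Fatou theorem for harmonic functions in a half-space, applied on each $E_N$, then produces a measurable function $\tilde f$ with $\tilde f(x)=\lim_{t\to 0^+}u(x,t)$ for a.e. $x\in\rn$ and $|\tilde f(x)|\le u^\ast(x)$ a.e. This already yields $\|\tilde f\|_\vlp\le\|u^\ast\|_\vlp\ls\|f\|_\vhs$.

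The remaining step is to identify $\tilde f$ with $f$ as a tempered distribution. For any $g\in\cs(\rn)\subset L^{p^\ast(\cdot)}(\rn)$ (the latter inclusion uses that $p_-\in[1,\fz)$ makes the dual exponent $p^\ast(\cdot)$ take values in $[1,\fz]$, while Schwartz functions are bounded with rapid decay), the H\"older inequality (Remark \ref{r-vlp}(iv)) combined with $u^\ast\in\vlp$ gives $u^\ast|g|\in L^1(\rn)$, providing an integrable majorant for $|u(\cdot,t)g|$ uniformly in $t>0$. Dominated convergence then yields
$$\lim_{t\to 0^+}\int_\rn u(x,t)g(x)\,dx=\int_\rn\tilde f(x)g(x)\,dx.$$
On the other hand, using the decomposition $P_t=(\psi^{(1)})_t\ast h_t+(\psi^{(2)})_t$ from \eqref{poisson} together with the standard approximation-to-identity fact that $f\ast\psi_t\to f$ in $\cs'(\rn)$ whenever $\psi\in\cs(\rn)$ satisfies $\int_\rn\psi\,dx=1$, one checks that $f\ast P_t\to f$ in $\cs'(\rn)$, so the same limit equals $\langle f,g\rangle$. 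Comparing the two expressions forces $f=\tilde f$ in $\cs'(\rn)$, and hence $f\in\vlp$ with $\|f\|_\vlp\ls\|f\|_\vhs$ as desired.

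The main obstacle I anticipate is applying the Fatou theorem for harmonic functions under the weak hypothesis $u^\ast\in\vlp$ rather than $u^\ast\in L^p(\rn)$ for a fixed exponent $p$; this is handled by the level-set exhaustion $\rn=\bigcup_N E_N$ described above, which reduces matters to the local form of Fatou's theorem valid for $u$ non-tangentially bounded on a set of positive measure, a standard half-space result. A secondary verification needed is the distributional convergence $f\ast P_t\to f$ in $\cs'(\rn)$ for bounded distributions $f$, which follows routinely from the splitting \eqref{poisson} and the classical approximation property of Schwartz mollifiers.
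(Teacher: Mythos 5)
Your proof is correct. Since the paper omits the argument entirely, citing only \cite[Theorem~3.5]{ns14} and \cite[Corollary~4.20]{zsy15}, there is no in-paper proof to compare against, but your route is a self-contained and sound one: realize $u=f\ast P_t$, exhaust $\rn$ by the level sets $E_N$ of $u^\ast\in\vlp$ (finite a.e.), invoke the local Fatou theorem on each $E_N$ to obtain a measurable boundary function $\tilde f$ with $|\tilde f|\le u^\ast$ a.e., and then identify $\tilde f=f$ in $\cs'(\rn)$ by testing against $g\in\cs(\rn)\subset L^{p^\ast(\cdot)}(\rn)$ with dominated convergence on one side and $f\ast P_t\to f$ in $\cs'(\rn)$ on the other. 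Two small remarks. First, for the convergence $f\ast P_t\to f$ in $\cs'(\rn)$ you appeal to the splitting \eqref{poisson} plus mollifier approximation, which requires a bit of care because $\int\psi^{(1)}=0$ and one must still control $f\ast(\psi^{(1)})_t\ast h_t$; it is cleaner and entirely within the paper's toolkit to quote the proof of Lemma~\ref{l-dense}(ii) (via \cite[Proposition~4.2]{cw14}), which already gives $f\ast P_t\to f$ in $\vhs$ and hence in $\cs'(\rn)$. Second, when $p_->1$ there is a shorter alternative avoiding local Fatou: the family $\{f\ast\phi_t\}_{t>0}$ is dominated pointwise by $f^\ast_{N,+}\in\vlp$ and hence bounded in the reflexive space $\vlp$, so a subsequence converges weakly to some $h\in\vlp$, which must equal $f$ by $\cs'$-convergence, giving $\|f\|_{\vlp}\le\liminf\|f\ast\phi_{t_k}\|_{\vlp}\ls\|f\|_{\vhs}$. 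Your local Fatou argument has the advantage of covering the borderline case $p_-=1$ (where reflexivity fails) uniformly, which is precisely what the lemma's hypothesis $p_-\in[1,\fz)$ demands.
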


We now turn to prove Theorem \ref{t-rtc1}.

\begin{proof}[Proof of Theorem \ref{t-rtc1}]
We first prove ``(i)$\Rightarrow$(ii)". Let $f\in\vhs$. Then, by Lemma \ref{l-dense},
we see that $f$ is a distribution restricted at infinity.
On the other hand, by the definition of the radial grand maximal function in
\eqref{granmf}
and Lemma \ref{l-rtb}, we easily find that, for all $\ez\in(0,\fz)$,
\begin{eqnarray*}
&&\|f\ast \phi_\ez\|_{\vlp}+\sum_{j=1}^n\|R_j(f)\ast\phi_\ez\|_{\vlp}\\
&&\hs\ls\|f_{N,+}^\ast\|_{\vlp}+\sum_{j=1}^n\|(R_j(f))_{N,+}^\ast\|_{\vlp}\\
&&\hs\sim \|f\|_{\vhs}+\sum_{j=1}^n\|R_j(f)\|_{\vhs}
\ls\|f\|_{\vhs}.
\end{eqnarray*}
Furthermore, if $p_-\in[1,\fz)$, then, using Lemmas~\ref{l-rtb}
and \ref{l-p},
we have
$$
 \|f\|_{\vlp}+\sum_{j=1}^n\|R_j(f)\|_{\vlp}
 \ls
 \|f\|_{\vhs}+\sum_{j=1}^n\|R_j(f)\|_{\vhs}
 \ls\|f\|_{\vhs},
$$
which completes the proof of ``(i)$\Rightarrow$(ii)".

Next, we show ``(ii)$\Rightarrow$(i)".
Suppose that $f$ is a distribution restricted at infinity and \eqref{rtc-3} holds true.
For any $\ez\in(0,\fz)$, let
$\hbf_\ez:=\{u_0^{(\ez)},u_1^{(\ez)},\dots,u_n^{(\ez)}\},$
where $u_0^{(\ez)}:=f_\ez\ast P_t$, $u_j^{(\ez)}:=f_\ez\ast Q_t^{(j)}$ for each $j\in\{1,\dots,n\}$
and $f_\ez:=f\ast \phi_\ez$. Since $f$ is a distribution restricted at infinity,
it follows that $f\ast \phi_\ez\in L^r(\rn)$
with $r\in[1+p_+,\fz)$ and hence
$\{u_j^{(\ez)}\}_{j=0}^n$ are harmonic on $\urn$, continuous
and bounded on its closure (see \cite[pp.\,123-124]{stein93}). Moreover,
by \cite[p.\,236, Theorem 4.17]{StWe71}, we know that
$\hbf_\ez$ satisfies the generalized Cauchy-Riemann equation \eqref{gcr-equ}
and $\sup_{t\in(0,\fz)}\||\hbf_\ez(\cdot,t)|\|_{L^r(\rn)}<\fz$.
Thus, by Lemma \ref{l-hv} and Remark \ref{r-hv}(ii), we see that, for all
$\eta\in[\frac{n-1}n,p_0)$ and $(x,t)\in\urn$,
\begin{equation*}
|\hbf_\ez(x,t)|^\eta\le(|\hbf_\ez(x,0)|^\eta\ast P_t)(x),
\end{equation*}
where $\hbf_\ez(\cdot,0)
:=\{f\ast\phi_\ez, R_1(f)\ast\phi_\ez,\dots,R_n(f)\ast \phi_\ez\}$.
By this and Remark \ref{r-defn}(iii), we conclude that,
for all $\vez,\,t\in(0,\fz)$,
\begin{eqnarray*}
\lf\|\hbf_\ez(\cdot,t)\r\|_{\vlp}
&&\le\lf\||\hbf_\ez(\cdot,0)|^\eta\ast P_t\r\|_{L^{\frac{p(\cdot)}\eta}(\rn)}^{1/\eta}
\ls \lf\|\cm(|\hbf_\ez(\cdot,0)|^\eta)\r\|_{L^{\frac{p(\cdot)}\eta}(\rn)}^{1/\eta}\\
&&\ls \||\hbf_\ez(\cdot,0)|^\eta\|_{L^{\frac{p(\cdot)}\eta}(\rn)}^{1/\eta}
\sim\||\hbf_\ez(\cdot,0)|\|_{\vlp}\\
&&\ls\|f\ast\phi_\ez\|_{\vlp}+\sum_{j=1}^n\|R_j(f)\ast\phi_\ez\|_{\vlp}
\ls A_3,
\end{eqnarray*}
which, implies that
$\sup_{t\in(0,\fz)}\|\hbf_\ez(\cdot,t)\|_{\vlp}\ls A_3<\fz.$
Thus, by Proposition \ref{p-hv1}, we know that
$(f_\ez)_P^\ast\in\vlp$ and $\|(f_\ez)_P^\ast\|_\vlp\ls A_3$.
Observe that $f\ast P_t\in L^{1+r}(\rn)$ when $r\in [p_+,\fz)$
(see the proof of Lemma \ref{l-dense}(ii)), it follows,
from \cite[p.\,10, Theorem 1.18]{StWe71}, that $f\ast P_t\ast\phi_{\ez}$
converges in measure to $f\ast P_t$ as $\ez\to\fz$, which, together with,
the Riesz lemma (see, for example, \cite[Theorem 1.1.13]{Gra1}) and
Remark \ref{r-welld}, implies that
there exist $\{\ez_k\}_{k\in\nn}\st(0,\fz)$ such that $\ez_k\to 0^+$ as
$k\to\fz$,
for all $(x,t)\in\urn$,
$\lim_{k\to\fz} f\ast P_t\ast\phi_{\ez_k}=f\ast P_t$.
Therefore, for all $x\in \rn$
\begin{eqnarray*}
f_P^\ast(x)
&&=\sup_{|y-x|<t}|f\ast P_t(y)|=\sup_{|y-x|<t}\lim_{k\to\fz}|f_{\ez_k}\ast P_t(y)|\\
&&\le \lim_{k\to\fz}\sup_{|y-x|<t}|f_{\ez_k}\ast P_t(y)|=\lim_{k\to\fz}(f_{\ez_k})_P^\ast(x),
\end{eqnarray*}
which,
combined with Lemma \ref{l-vhs} and the Fatou lemma (see \cite[Theorem 2.61]{cfbook}),
implies that $f\in\vhs$ and
$$\|f\|_{\vhs}\sim\|f_P^\ast \|_{\vlp}
\ls\lf\|\lim_{k\to\fz}(f_{\ez_k})_P^\ast\r\|_{\vlp}\ls A_3.$$
This finishes the proof of ``(ii)$\Rightarrow$(i)".

Furthermore,
if $p_-\in[1,\fz)$, then \eqref{rtc-3a} implies that
$$\{f,R_1(f),\dots,R_n(f)\}\st\vlp\subset L^{p_-}(\rn)+L^{p_+}(\rn).$$
Let $f=f_{0,1}+f_{0,2}$ and $R_j(f)=f_{j,1}+f_{j,2}$ for all $j\in\{1,\dots,n\}$,
with $f_{j,1}\in L^{p_-}(\rn)$ and $f_{j,2}\in L^{p_+}(\rn)$ for all
$j\in\{0,1,\dots,n\}$.
Then, for all $(x,t)\in\urn$,
\begin{eqnarray*}
\hbf(x,t):=&&\{f\ast P_t(x), R_1(f)\ast P_t(x),\,\dots,\,R_n(f)\ast P_t(x)\}\\
=&&\{f_{0,1}\ast P_t(x)+f_{0,2}\ast P_t(x),\,\dots,\,f_{n,1}\ast P_t(x)+f_{n,2}\ast P_t(x)\}
\end{eqnarray*}
and hence
$$
|\hbf(x,t)|\ls\sum_{j=0}^n\sum_{i=1}^2|f_{j,i}*P_t(x)|,
\quad
\sup_{t\in(0,\fz)}\|f_{j,1}*P_t\|_{L^{p_-}(\rn)}+
\sup_{t\in(0,\fz)}\|f_{j,2}*P_t\|_{L^{p_+}(\rn)}<\fz,
$$
and
$$
 \lim_{t\to0^+} f*P_t=f, \quad
 \lim_{t\to0^+} R_j(f)*P_t=R_j(f), \ j\in\{1,\dots,n\},
$$
pointwise almost everywhere (see \cite[Theorem~5.8]{cfbook}).
Thus, by an argument similar to that used in the proof of
\eqref{2.11x}, we conclude that \eqref{hbf-K} holds true for
$\eta\in[\frac{n-1}n,p_0)$ and hence, by Remark \ref{r-hv}(ii), we know that,
for all $(x,t)\in\urn$,
\begin{equation*}
|\hbf(x,t)|^\eta\le(|\hbf(\cdot,0)|^\eta\ast P_t)(x),
\end{equation*}
where $\hbf(\cdot,0):=\{f, R_1(f),\dots,R_n(f)\}$.
From this, $p(\cdot)\in\cm\cp(\rn)$ and Remark \ref{r-defn}(iii),
we further deduce that
\begin{eqnarray*}
\sup_{t\in(0,\fz)}\|\hbf(\cdot,t)\|_{\vlp}
&&\ls  \sup_{t\in(0,\fz)}\lf\|\lf\{|\hbf(\cdot,0)|^\eta\ast P_t\r\}^{1/\eta}\r\|_\vlp\\
&&\ls \lf\|\cm\lf(|\hbf(\cdot,0)|^\eta\r)\r\|_{L^{p(\cdot)/\eta}}^{1/\eta}
\ls \lf\||\hbf(\cdot,0)|^\eta\r\|_{L^{p(\cdot)/\eta}}^{1/\eta}
\sim\||\hbf(\cdot,0)|\|_\vlp\\
&&\ls \|f\|_{\vlp}+\sum_{j=1}^n\|R_j(f)\|_\vlp\ls
A_3.
\end{eqnarray*}
Therefore, by Lemma \ref{l-vhs} and Proposition \ref{p-hv1}, we have
$$\|f\|_{\vhs}\sim\|f_P^\ast \|_{\vlp}
\sim \sup_{t\in(0,\fz)}\|\hbf(\cdot,t)\|_{\vlp}\ls A_3.$$
This finishes the proof of Theorem \ref{t-rtc1}.
\end{proof}

\subsection{The case $p_-\in(0,\frac{n-1}n]$\label{s2.2}}
\hskip\parindent
In this subsection, we prove Theorem \ref{t-rtc2} and begin with some notions.
Following \cite[p.\,133]{stein93}, let $m\in\nn$ and $\{e_0,e_1,\dots,e_n\}$ be
an orthonormal basis of $\rr^{n+1}$.
Then the \emph{tensor product of $m$ copies of $\rr^{n+1}$} is defined to be the set
\begin{equation*}
\bigotimes^m\rr^{n+1}
:=\lf\{\emph{\textbf{G}}:=\sum_{j_1,\,\dots,\,j_m=0}^n
G_{j_1,\,\dots,\,j_m}e_{j_1}\otimes\cdots\otimes e_{j_m}:\
G_{j_1,\,\dots,\,j_m}\in\cc\r\},
\end{equation*}
where $e_{j_1}\otimes\cdots\otimes e_{j_m}$ denotes the \emph{tensor product}
of $e_{j_1},\,\dots,\,e_{j_m}$,
and each $\hbg\in \bigotimes\limits^m\rr^{n+1}$ is called a
\emph{tensor of rank $m$}.

Let $\hbg:\ \urn\to \bigotimes\limits^m\rr^{n+1}$
be a tensor-valued function of rank $m$ of the form that,
for all $(x,t)\in\urn$,
\begin{equation}\label{tensor-va}
\hbg(x,t)=\sum_{j_1,\,\dots,\,j_m=0}^n
G_{j_1,\,\dots,\,j_m}(x,t)e_{j_1}\otimes\cdots\otimes e_{j_m}
\end{equation}
with $G_{j_1,\,\dots,\,j_m}(x,t)\in\cc$.
Then the tensor-valued function $\hbg$ of rank $m$
is said to be \emph{symmetric}
if, for any permutation $\sz$ on $\{1,\dots,m\}$,
$j_1,\,\dots,\,j_m\in\{0,\,1,\,\dots,\,n\}$ and $(x,t)\in\rr^{n+1}$,
$$
 G_{j_1,\,\dots,\,j_m}(x,t)=G_{j_{\sz(1)},\,\dots,\,j_{\sz(m)}}(x,t).
$$
For $\hbg$ being symmetric, $\hbg$ is said to be of \emph{trace zero} if, for all
$j_3,\,\dots,\,j_m\in\{0,\,1,\,\dots,\,n\}$ and $(x,t)\in\rr^{n+1}$,
$$
 \sum_{j=0}^n G_{j,\,j,\,j_3,\,\dots,\,j_m}(x,t)\equiv 0.
$$
If $\hbg^{(1)},\ \hbg^{(2)}\in \bigotimes\limits^{m}\rr^{n+1}$ and
$$\hbg^{(l)}:=\sum_{j_1,\,\dots,\,j_m=0}^n
G_{j_1,\,\dots,\,j_m}^{(l)}e_{j_1}\otimes\cdots\otimes e_{j_m},\ l\in\{1,2\},$$
then define the \emph{product} of $\hbg^{(1)}$ and $\hbg^{(2)}$ by setting
$$
 \lf\langle \hbg^{(1)},\hbg^{(2)}\r\rangle:=\sum_{j_1,\,\dots,\,j_m=0}^n
 G_{j_1,\,\dots,\,j_m}^{(1)}G_{j_1,\,\dots,\,j_m}^{(2)}.
$$

Let $\hbg$ be as in \eqref{tensor-va}.
Its \emph{gradient}
$\nabla \hbg:\ \urn\to \bigotimes\limits^{m+1}\rr^{n+1}$
is a tensor-valued function of rank $m+1$
of the form that, for all $(x,t)\in\urn$,
\begin{eqnarray*}
\nabla \hbg(x,t)
&&=\sum_{j=0}^n\frac{\partial \hbg}{\partial x_j}(x,t)\otimes e_j\\
&&=\sum_{j=0}^n\sum_{j_1,\,\dots,\,j_m=0}^n\frac{\partial G_{j_1,\,\dots,\,j_m}}
{\partial x_j}(x,t)e_{j_1}\otimes\cdots\otimes e_{j_m}\otimes e_j,
\end{eqnarray*}
here and hereafter, we always let $x_0:=t$.
A tensor-valued function $\hbg$ is said to satisfy the
\emph{generalized Cauchy-Riemann equation} if both $\hbg$ and $\nabla \hbg$
are symmetric and of trace zero.
Obviously, if $m=1$, this definition of generalized Cauchy-Riemann equations is
equivalent to that as in \eqref{gcr-equ}. For more details on the
generalized Cauchy-Riemann equation on tensor-valued functions, we refer the reader
to \cite{StWe68}.

The following conclusion is just \cite[Theorem 1]{cz64}.

\begin{lemma}\label{l-ho2}
Let $m\in\nn$ and $u$ be a harmonic function on $\urn$. Then, for all
$\eta\in [\frac{n-1}{n+m-1},\fz)$, $|\nabla^mu|^\eta$ is subharmonic, where,
for all $(x,t)\in\urn$,
$$
 \nabla^mu(x,t):=
 \{\partial^\alpha u(x,t)\}_{\alpha\in\zz_+^{n+1},\,|\alpha|=m}
 $$
and, for all $\alpha:=\{\alpha_0,\,\dots,\,\alpha_n\}\in\zz_+^{n+1}$,
$|\alpha|:=\sum_{j=0}^n\alpha_j$, $x_0:=t$,
$\partial^\alpha:=(\frac{\partial}{\partial x_0})^{\alpha_0}
\cdots (\frac{\partial}{\partial x_n})^{\alpha_n}$ and
$$
 |\nabla^mu|:=\lf\{\sum_{\alpha\in\zz_+^{n+1},\,|\alpha|=m}
 |\partial^\alpha u(x,t)|^2\r\}^{\frac12}.
$$
\end{lemma}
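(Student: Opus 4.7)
The plan is to recognize that the tensor of $m$-th derivatives of a harmonic function $u$ is a symmetric, trace-free tensor-valued function on $\urn$ that satisfies the generalized Cauchy-Riemann equation of this subsection, and then to invoke the Calder\'on-Zygmund subharmonicity theorem \cite[Theorem 1]{cz64} for such tensors.

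First, I would arrange the derivatives $\{\partial^\alpha u\}_{\alpha\in\zz_+^{n+1},\,|\alpha|=m}$ into the tensor-valued function
$$\hbg(x,t):=\sum_{j_1,\dots,j_m=0}^n\partial_{x_{j_1}}\cdots\partial_{x_{j_m}}u(x,t)\,e_{j_1}\otimes\cdots\otimes e_{j_m}$$
of rank $m$ on $\urn$. Symmetry is immediate from the commutativity of partial derivatives. For the trace-zero condition, fix any $j_3,\dots,j_m\in\{0,\dots,n\}$ and compute
$$\sum_{j=0}^n\partial_{x_j}^2\partial_{x_{j_3}}\cdots\partial_{x_{j_m}}u=\partial_{x_{j_3}}\cdots\partial_{x_{j_m}}(\Delta u)=0$$
by the harmonicity of $u$. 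Applying the same reasoning to $\nabla\hbg=\nabla^{m+1}u$ shows that $\nabla\hbg$ is also symmetric and of trace zero, so $\hbg$ satisfies the generalized Cauchy-Riemann equation. Moreover, on the space of symmetric trace-free rank-$m$ tensors on $\rr^{n+1}$, which forms an irreducible module for the rotation group, any rotation-invariant inner product is unique up to a positive scalar; thus the tensor norm $|\hbg|$ and the lemma's $|\nabla^m u|=(\sum_{|\alpha|=m}|\partial^\alpha u|^2)^{1/2}$ differ only by a multiplicative constant, and subharmonicity of $|\hbg|^\eta$ is equivalent to that of $|\nabla^m u|^\eta$.

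The heart of the argument, and the main obstacle, is then the Calder\'on-Zygmund subharmonicity theorem \cite[Theorem 1]{cz64}: for any symmetric, trace-free tensor-valued function $\hbg$ of rank $m$ on $\urn$ satisfying the generalized Cauchy-Riemann equation, $|\hbg|^\eta$ is subharmonic on $\urn$ whenever $\eta\ge\frac{n-1}{n+m-1}$. Its proof combines the pointwise identity
$$\Delta|\hbg|^\eta=\eta|\hbg|^{\eta-2}|\nabla\hbg|^2+\eta(\eta-2)|\hbg|^{\eta-4}|\langle\hbg,\nabla\hbg\rangle|^2,$$
valid because each component of $\hbg$ is harmonic so $\Delta\hbg=0$, with a sharp Kato-type inequality of the form $|\nabla\hbg|^2\ge\frac{n+m}{n+m-1}|\nabla|\hbg||^2$ that uses the full algebraic force of the symmetry and trace-freeness of both $\hbg$ and $\nabla\hbg$. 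It is precisely this refined Kato inequality that lowers the admissible exponent from the naive $\eta\ge1$ (obtained by Cauchy-Schwarz alone) down to the stated $\eta\ge\frac{n-1}{n+m-1}$.
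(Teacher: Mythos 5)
Your proposal takes essentially the same route as the paper: the paper's entire ``proof'' of Lemma \ref{l-ho2} is the single sentence ``the following conclusion is just \cite[Theorem 1]{cz64}''. Your verification that $\hbg := \nabla^m u$ and $\nabla\hbg = \nabla^{m+1}u$ are symmetric and of trace zero, hence that $\hbg$ satisfies the generalized Cauchy-Riemann equation of this subsection, is correct and is a useful fleshing-out of what the paper leaves implicit.

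However, the step where you reconcile the two norms is genuinely wrong. You assert that the tensor Hilbert--Schmidt norm $|\hbg| = \bigl(\sum_{j_1,\dots,j_m=0}^{n}|\partial_{x_{j_1}}\cdots\partial_{x_{j_m}}u|^2\bigr)^{1/2}$ and the lemma's $|\nabla^m u| = \bigl(\sum_{|\alpha|=m}|\partial^\alpha u|^2\bigr)^{1/2}$ differ only by a multiplicative constant, invoking Schur's lemma on the grounds that any rotation-invariant inner product on the irreducible module of symmetric trace-free rank-$m$ tensors is unique up to a scalar. The difficulty is that the multi-index sum $\sum_{|\alpha|=m}|\partial^\alpha u|^2$ is \emph{not} rotation invariant, so Schur's lemma does not apply to it. For $m=2$ on a two-dimensional domain with $u(x_1,x_2)=x_1x_2$, the multi-index sum at any point is $u_{11}^2+u_{12}^2+u_{22}^2=1$, whereas after rotating to $x_1'=(x_1+x_2)/\sqrt2$, $x_2'=(-x_1+x_2)/\sqrt2$ one has $u=\tfrac12\bigl((x_1')^2-(x_2')^2\bigr)$ and the sum becomes $1+0+1=2$. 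In fact $|\hbg|^2 = \sum_{|\alpha|=m}\binom{m}{\alpha}|\partial^\alpha u|^2$ with $\binom{m}{\alpha}$ the multinomial coefficient; the two norms are equivalent but not proportional, and equivalence of norms does not preserve subharmonicity of a fixed power. Indeed, take the harmonic $u(x_1,x_2)=x_1^3-3x_1x_2^2$ and $m=2$, so the lemma's threshold $\tfrac{n-1}{n+m-1}$ is $0$; a short computation gives
\begin{equation*}
 \Delta\bigl[(2x_1^2+x_2^2)^{\eta/2}\bigr]
 =\eta\,(2x_1^2+x_2^2)^{\eta/2-2}\bigl[(4\eta-2)x_1^2+(\eta+1)x_2^2\bigr],
\end{equation*}
which is negative on $\{x_2=0,\,x_1\neq 0\}$ whenever $\eta<\tfrac12$, so the paper's $|\nabla^2 u|^\eta=6^\eta(2x_1^2+x_2^2)^{\eta/2}$ fails to be subharmonic, while $|\hbg|^\eta=72^{\eta/2}(x_1^2+x_2^2)^{\eta/2}$ is subharmonic for all $\eta>0$. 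Calder\'on and Zygmund state their theorem with the multinomial weights included, i.e., for $|\hbg|$; the lemma's unweighted multi-index sum is an imprecision in the paper's formulation, and the correct repair is to carry the multinomial weights (equivalently, work with $|\hbg|$ throughout, as the paper in fact does in Corollary \ref{c-ho1} and Proposition \ref{p-ho1}), rather than to assert a proportionality that fails. As a minor further point, the Kato-type constant $\frac{n+m}{n+m-1}$ you record would only yield the threshold $\frac{n-1}{n+m-1}$ when $m=1$; in ambient dimension $n+1$ the constant needed to reach the Calder\'on--Zygmund threshold for general $m$ is $\frac{n+2m-1}{n+m-1}$.
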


The following lemma was proved in \cite[Theorem 14.3]{Uch01} (see also \cite{StWe71}).
\begin{lemma}\label{l-ho3}
Let $m\in\nn\cap[2,\fz)$, $\textbf{G}$ be a tensor-valued function of rank
$m$ satisfying that both $\textbf{G}$ and $\nabla \textbf{G}$ are symmetric,
and $\textbf{G}$ is of trace zero.
Then there exists a harmonic function $u$ on $\urn$ such that $\nabla^m u=\textbf{G}$,
namely, for all $j_1,\,\dots,\,j_m\in\{0,1,\dots,n\}$ and $(x,t)\in\urn$,
$$\frac{\partial}{\partial x_{j_1}}\cdots \frac{\partial}{\partial x_{j_m}}
u(x,t)=\textbf{G}_{j_1,\,\dots,\,j_m}(x,t).$$
\end{lemma}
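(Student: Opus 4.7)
The plan is to recover a scalar potential $u$ by integrating the tensor $\hbg$ a total of $m$ times against different coordinate directions, and then to correct $u$ by a polynomial so that it becomes harmonic. The symmetry of $\nabla \hbg$ provides exactly the Poincar\'e-type compatibility needed to carry out each integration on the (convex, hence star-shaped) domain $\urn$.

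First I would fix indices $j_2,\dots,j_m\in\{0,1,\dots,n\}$ and examine the vector field $V_j:=G_{j,j_2,\dots,j_m}$ on $\urn$. The symmetry of $\nabla \hbg$ gives
\[
 \partial_k V_j=\partial_k G_{j,j_2,\dots,j_m}=\partial_j G_{k,j_2,\dots,j_m}=\partial_j V_k,
\]
so $V$ is closed, and the Poincar\'e lemma produces a smooth function $W_{j_2,\dots,j_m}$ with $\partial_j W_{j_2,\dots,j_m}=G_{j,j_2,\dots,j_m}$. The symmetry of $\hbg$ in all $m$ indices, combined with a consistent choice of integration constants (say, integrating along straight segments from a common base point), makes $W$ symmetric in $j_2,\dots,j_m$; its gradient $\partial_k W_{j_2,\dots,j_m}=G_{k,j_2,\dots,j_m}$ is symmetric in $(k,j_2,\dots,j_m)$ because $\hbg$ is. Hence $W$ is again a symmetric tensor of rank $m-1$ with symmetric gradient, and the same argument applies. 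Iterating this construction $m$ times yields a scalar function $u$ on $\urn$ satisfying
\[
 \partial_{j_1}\cdots\partial_{j_m}u=G_{j_1,\dots,j_m}\quad\text{for all }j_1,\dots,j_m\in\{0,1,\dots,n\}.
\]

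It remains to arrange harmonicity. The trace-zero hypothesis gives, for any $j_3,\dots,j_m$,
\[
 \partial_{j_3}\cdots\partial_{j_m}\Delta u=\sum_{j=0}^n G_{j,j,j_3,\dots,j_m}\equiv 0,
\]
so every partial derivative of $\Delta u$ of order $m-2$ vanishes identically on $\urn$. This forces $\Delta u$ to be a polynomial $P$ of degree at most $m-3$, with the convention that $P\equiv 0$ in the base case $m=2$. Since $\Delta$ is surjective from polynomials of degree $\le k+2$ onto polynomials of degree $\le k$, I can pick a polynomial $Q$ of degree at most $m-1$ with $\Delta Q=-P$; then $u+Q$ is harmonic while $\nabla^m(u+Q)=\nabla^m u=\hbg$, since $\nabla^m Q=0$ because $\deg Q<m$.

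The main obstacle is the inductive bookkeeping at the integration step: at each stage one must verify that the intermediate tensor produced is globally symmetric in all remaining indices and that its gradient is again symmetric, so that the next integration is legal. The freedom in choosing integration constants is exactly what permits both symmetries to be preserved simultaneously, and this combinatorial core (which is essentially the content of \cite{StWe68} and \cite{Uch01}) is where the real work lies; once it is in place, the passage to harmonicity via the polynomial correction is routine.
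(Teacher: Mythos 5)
The paper does not prove this lemma; it is stated as a citation of \cite[Theorem 14.3]{Uch01} (see also \cite{StWe71}), so there is no internal argument to compare against. Your self-contained proof is correct and is, as far as I can tell, essentially the standard argument underlying those references: the symmetry of $\nabla\hbg$ supplies the Poincar\'e closedness needed to integrate once on the convex (hence star-shaped) domain $\urn$; the full symmetry of $\hbg$, preserved at each stage by a fixed base point in the homotopy formula, supplies it for the subsequent integrations; after $m$ steps one obtains a scalar $u$ with $\nabla^m u=\hbg$. The trace-zero condition is used only at the end: all $(m-2)$-th order derivatives of $\Delta u$ vanish on the connected set $\urn$, so $\Delta u$ is a polynomial of degree at most $m-3$ (identically zero when $m=2$), and since $\Delta$ maps polynomials of degree at most $m-1$ onto those of degree at most $m-3$, a correction $Q$ with $\nabla^m Q=0$ renders $u+Q$ harmonic without changing $\nabla^m u$.

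One piece of bookkeeping you relegate to a closing remark but should state as an explicit induction hypothesis: writing $W^{(0)}:=\hbg$ and $W^{(l+1)}$ for the result of the $(l+1)$-th integration, the closedness condition at that step is the symmetry of $\nabla W^{(l)}=W^{(l-1)}$ in its first two indices (with $W^{(-1)}:=\nabla\hbg$), while the symmetry of the newly produced $W^{(l+1)}$ in all its indices follows from that of $W^{(l)}$ together with the base-point choice. Thus the induction propagates \emph{two} properties at once, with the base cases supplied by the hypothesized symmetry of $\nabla\hbg$ and of $\hbg$; your construction does achieve this, but the reader should not have to reconstruct the precise statement being iterated.
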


From Lemmas \ref{l-ho2} and \ref{l-ho3}, we easily deduce
the following conclusion.
\begin{corollary}\label{c-ho1}
Let $m\in\nn\cap [2,\fz)$ and $\textbf{G}$ be a tensor-valued function of
rank $m$ satisfying that both $\textbf{G}$ and $\nabla \textbf{G}$ are symmetric,
and $\textbf{G}$ is of trace zero. Then, for all $\eta\in[\frac{n-1}{n+m-1},\fz)$,
$| \textbf{G}|^\eta$ is subharmonic on $\urn$.
\end{corollary}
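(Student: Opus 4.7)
The plan is to chain the two preceding lemmas together in the obvious way. Given a tensor-valued function $\hbg$ of rank $m$ such that both $\hbg$ and $\nabla\hbg$ are symmetric and $\hbg$ is of trace zero, Lemma \ref{l-ho3} supplies a harmonic function $u$ on $\urn$ with $\nabla^m u=\hbg$. In particular, pointwise on $\urn$,
\begin{equation*}
|\hbg(x,t)|
=\lf\{\sum_{j_1,\,\dots,\,j_m=0}^n |G_{j_1,\,\dots,\,j_m}(x,t)|^2\r\}^{1/2}
=|\nabla^m u(x,t)|,
\end{equation*}
where the second equality uses the identification of the indexing sets and the definition of $|\nabla^m u|$ from Lemma \ref{l-ho2}.

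Once this identity is in hand, the subharmonicity of $|\hbg|^\eta$ for $\eta\in[\frac{n-1}{n+m-1},\fz)$ is immediate from Lemma \ref{l-ho2} applied to the harmonic function $u$ produced above. I would simply write $|\hbg|^\eta=|\nabla^m u|^\eta$ and quote Lemma \ref{l-ho2}.

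I do not expect any real obstacle here: the symmetry/trace-zero hypotheses are used solely to invoke Lemma \ref{l-ho3}, and the harmonicity of $u$ supplied by that lemma is precisely what Lemma \ref{l-ho2} needs as input. The only bookkeeping point worth mentioning in the proof is that the two definitions of ``$|\cdot|$'' (one as the Euclidean norm of the coefficients in the tensor expansion \eqref{tensor-va}, the other as $(\sum_{|\alpha|=m}|\partial^\alpha u|^2)^{1/2}$) agree under the identification $G_{j_1,\,\dots,\,j_m}=\partial_{x_{j_1}}\cdots\partial_{x_{j_m}}u$ coming from $\nabla^m u=\hbg$; this is a direct consequence of symmetry and requires no estimate. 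Thus the corollary is essentially a one-line deduction, and the proof can legitimately be presented in that compressed form.
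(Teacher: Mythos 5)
Your approach is the paper's own: the text after Lemma~\ref{l-ho3} literally says the corollary is deduced from Lemmas~\ref{l-ho2} and~\ref{l-ho3}, and your chain (use Lemma~\ref{l-ho3} to produce a harmonic $u$ with $\nabla^m u=\textbf{G}$, then quote Lemma~\ref{l-ho2}) is exactly that deduction. However, your bookkeeping claim that the two norms ``agree'' is wrong, and this is worth naming. Under $G_{j_1,\,\dots,\,j_m}=\partial_{x_{j_1}}\cdots\partial_{x_{j_m}}u$, the tensor norm $|\textbf{G}|^2=\sum_{j_1,\,\dots,\,j_m=0}^n|G_{j_1,\,\dots,\,j_m}|^2$ sums over all $(n+1)^m$ ordered $m$-tuples, whereas $|\nabla^m u|^2$ as defined in Lemma~\ref{l-ho2} sums over multi-indices $\alpha$ with $|\alpha|=m$. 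Symmetry makes the tensor entries with a common multi-index pattern coincide, but it does not cancel the multinomial multiplicities, so in fact
\begin{equation*}
|\textbf{G}|^2=\sum_{\alpha\in\zz_+^{n+1},\,|\alpha|=m}\frac{m!}{\alpha_0!\cdots\alpha_n!}\,|\partial^\alpha u|^2
\neq\sum_{\alpha\in\zz_+^{n+1},\,|\alpha|=m}|\partial^\alpha u|^2=|\nabla^m u|^2
\end{equation*}
for $m\ge2$. The two quantities are merely comparable, and subharmonicity of a fractional power is not an invariant of pointwise comparability, so as written you have only shown $|\nabla^m u|^\eta$ (in the multi-index normalization) is subharmonic, not $|\textbf{G}|^\eta$.

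The gap is closed by observing that the Calder\'on--Zygmund theorem cited for Lemma~\ref{l-ho2} (namely \cite[Theorem 1]{cz64}) is in substance a statement about the full tensor-normed gradient $\{\sum_{j_1,\,\dots,\,j_m=0}^n|\partial_{x_{j_1}}\cdots\partial_{x_{j_m}}u|^2\}^{1/2}$, which is exactly $|\textbf{G}|$; the multi-index sum written in the paper's statement of Lemma~\ref{l-ho2} is a slight misrecording of the cited source. With that corrected reading of Lemma~\ref{l-ho2}, your one-line deduction is sound and, since this is also what the corollary is used for in Proposition~\ref{p-ho1} (where $|\textbf{G}|$ is the tensor norm coming from the pairing $\langle\cdot,\cdot\rangle$), the statement ends up correct. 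In short: right route, but the claim ``the norms agree, a direct consequence of symmetry'' should be replaced by an appeal to the tensor-norm form of the Calder\'on--Zygmund theorem.
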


\begin{proposition}\label{p-ho1}
Let $p(\cdot)\in\cm\cp(\rn)$, $m\in\nn$ and $p_-\in(\frac{n-1}{n+m-1},\fz)$.
Assume that $u$ is a harmonic function on $\urn$ and $\textbf{G}$ is
a tensor-valued function of rank $m$
satisfying generalized Cauchy-Riemann equation such that
$\langle \textbf{G},\otimes^me_0\rangle=u$ and
$$\sup_{t\in(0,\fz)}\|\textbf{G}(\cdot,t)\|_{\vlp}<\fz.$$
Then $u^\ast\in\vlp$ and there exists a positive constant $C$ such that
$$\|u^\ast\|_{\vlp}\le C\sup_{t\in(0,\fz)}\|\textbf{G}(\cdot,t)\|_{\vlp}.$$
\end{proposition}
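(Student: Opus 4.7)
The plan is to mimic the sufficiency half of Proposition~\ref{p-hv1}, replacing the harmonic vector $\hbf$ by the tensor-valued function $\textbf{G}$ and drawing subharmonicity from Corollary~\ref{c-ho1} in place of \cite[p.\,234, Theorem~4.14]{StWe71}. Since $p(\cdot)\in\cm\cp(\rn)$ and $p_->\frac{n-1}{n+m-1}$, I first fix $p_0\in(\frac{n-1}{n+m-1},p_-)$ such that $\cm$ is bounded on $L^{p(\cdot)/p_0}(\rn)$, and then select $\eta\in[\frac{n-1}{n+m-1},p_0)$. By Corollary~\ref{c-ho1}, $|\textbf{G}|^\eta$ is subharmonic on $\urn$. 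Observe also that, since $u=\langle \textbf{G},\otimes^m e_0\rangle=G_{0,\dots,0}$, the pointwise inequality $|u(x,t)|\le|\textbf{G}(x,t)|$ holds on $\urn$, so controlling $|\textbf{G}|$ automatically controls $u^\ast$.

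Next I would establish the tensor-valued analogue of Lemma~\ref{l-hv}: for every $a\in(0,\fz)$ and $(x,t)\in\urn$,
\begin{equation*}
|\textbf{G}(x,t+a)|\le \lf\{\lf(|\textbf{G}(\cdot,a)|^\eta\ast P_t\r)(x)\r\}^{1/\eta}.
\end{equation*}
Following the blueprint of Lemma~\ref{l-hv}, I would choose $q\in(1,p_-/\eta)$ and use the hypothesis $\sup_{t\in(0,\fz)}\|\textbf{G}(\cdot,t)\|_\vlp<\fz$ together with Remark~\ref{r-vlp}(ii) to verify the kernel bound
\begin{equation*}
\sup_{t\in(0,\fz)}\int_\rn\frac{|\textbf{G}(x,t+a)|^{\eta q}}{(|x|+1+t)^{n+1}}\,dx<\fz,
\end{equation*}
splitting the integrand over $\{|\textbf{G}(x,t+a)|\ge 1\}$ and its complement exactly as in the proof of Lemma~\ref{l-hv}. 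Invoking \cite[Theorem~8]{Nua7312} then identifies the least harmonic majorant of the subharmonic function $|\textbf{G}(\cdot,\cdot+a)|^\eta$ with $|\textbf{G}(\cdot,a)|^\eta\ast P_t$, which gives the displayed inequality. This is the step corresponding to Remark~\ref{r-hv}(i), justified here by Corollary~\ref{c-ho1} rather than by the first-order generalized Cauchy-Riemann result.

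Having the Poisson dominance in hand, I would extract a boundary trace by weak compactness. Since $\{|\textbf{G}(\cdot,t)|^\eta\}_{t\in(0,\fz)}$ is bounded in the reflexive Banach space $L^{p(\cdot)/\eta}(\rn)$ (using $p(\cdot)/\eta>p_-/\eta>1$, \cite[Corollary~2.81]{cfbook}, and \cite[p.\,126, Theorem~1]{Yo78}), there exist $t_k\to 0^+$ and $h\in L^{p(\cdot)/\eta}(\rn)$, with $h\ge 0$ almost everywhere, such that $|\textbf{G}(\cdot,t_k)|^\eta$ converges weakly to $h$. Testing against $g_0(y):=P_t(x-y)\in L^{(p(\cdot)/\eta)^\ast}(\rn)$ and using the continuity of $\textbf{G}$ together with the bound from the previous paragraph yields, for every $(x,t)\in\urn$,
\begin{equation*}
|u(x,t)|^\eta\le|\textbf{G}(x,t)|^\eta\le h\ast P_t(x)\ls \cm(h)(x).
\end{equation*}

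The final step is to run the duality estimate from the end of the proof of Proposition~\ref{p-hv1}. Taking non-tangential suprema, applying the boundedness of $\cm$ on $L^{p(\cdot)/\eta}(\rn)$, using Remark~\ref{r-vlp}(v) to write the $L^{p(\cdot)/\eta}$ norm of $h$ as a supremum of pairings with unit vectors in $L^{(p(\cdot)/\eta)^\ast}(\rn)$, and passing weak limits back through these pairings, I get
\begin{equation*}
\|u^\ast\|_\vlp\ls \|\cm(h)\|_{L^{p(\cdot)/\eta}(\rn)}^{1/\eta}\ls \|h\|_{L^{p(\cdot)/\eta}(\rn)}^{1/\eta}\ls \sup_{t\in(0,\fz)}\||\textbf{G}(\cdot,t)|\|_\vlp.
\end{equation*}
The hardest step will be the subharmonic majorant step (second paragraph): one must simultaneously justify the uniform kernel integrability for $|\textbf{G}|^{\eta q}$ from only the $L^{p(\cdot)}$-type control and confirm that the subharmonicity provided by Corollary~\ref{c-ho1} is strong enough to feed into \cite[Theorem~8]{Nua7312}. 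Everything afterwards is a standard reflexivity–duality argument that transfers directly from the proof of Proposition~\ref{p-hv1}.
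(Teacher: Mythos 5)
Your proof is correct and follows essentially the same approach as the paper: you invoke Corollary~\ref{c-ho1} (exactly as anticipated in Remark~\ref{r-hv}(i)) to get subharmonicity of $|\textbf{G}|^\eta$, then rerun the Poisson-majorant step from Lemma~\ref{l-hv}, and transfer the weak-compactness/boundary-trace/maximal-function/duality chain verbatim from the sufficiency part of Proposition~\ref{p-hv1}, which is precisely how the paper argues.
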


\begin{proof}
Let $\eta\in[\frac{n-1}{n+m-1},p_-)$.
Observe that $\hbg$ satisfies the generalized Cauchy-Riemann equation,
it follows, from Corollary \ref{c-ho1}, that $|\hbg|^\eta$ is subharmonic.
Thus, by Lemma \ref{l-hv} and its proof,
we find that, for all $a\in(0,\fz)$ and $(x,t)\in\urn$,
$$|\hbg(x,t+a)|^{\eta}\le (|\hbg(\cdot,a)|^\eta\ast P_t)(x).$$
On the other hand, it is easy to see that
\begin{eqnarray*}
\sup_{t\in(0,\fz)}\||\hbg(\cdot,t)|^\eta\|_{L^{\frac{p(\cdot)}{\eta}}(\rn)}^{\frac1\eta}
= \sup_{t\in(0,\fz)}\||\hbg(\cdot,t)|\|_{\vlp}<\fz,
\end{eqnarray*}
namely, $\{|\hbg(\cdot,t)|^\eta\}_{t\in(0,\fz)}$ is bounded in
$L^{\frac{p(\cdot)}{\eta}}(\rn)$. Then, by an argument similar to that
used in the proof of Proposition \ref{p-hv1},
we conclude that there exist a subsequence $\{\hbg(\cdot,t_k)\}_{k\in\nn}$
and $h\in L^{\frac{p(\cdot)}{\eta}}(\rn)$ such that $h$ is non-negative almost
everywhere in $\rn$ and, for all $(x,t)\in\urn$,
\begin{eqnarray*}
|\hbg(x,t)|^\eta=\lim_{k\to\fz}|\hbg(x,t_k+t)|^\eta
\le \lim_{k\to\fz}|\hbg(x,t_k)|^\eta\ast P_t(x)=h\ast P_t(x).
\end{eqnarray*}
Moreover, by this, we see that, for all $x\in\rn$,
\begin{eqnarray*}
u^\ast(x)
&&=\sup_{|y-x|<t}|u(y,t)|
=\sup_{|y-x|<t}\lf|\lf\langle \hbg(y,t),\otimes^{m} e_0\r\rangle\r|\\
&&\le \sup_{|y-x|<t}|\hbg(y,t)|\ls \lf[\cm(h)(x)\r]^{\frac1\eta}
\end{eqnarray*}
and hence, by an argument similar to that used in the proof of \eqref{2.17x},
we find that
$$\|u^\ast\|_{\vlp}\ls \lf\|\lf[\cm(h)\r]^{\frac1\eta}\r\|_{\vlp}
\ls \sup_{t\in(0,\fz)}\||\hbg(\cdot,t)|\|_{\vlp}.$$
This finishes the proof of Proposition \ref{p-ho1}.
\end{proof}

We now prove Theorem \ref{t-rtc2}.

\begin{proof}[Proof of Theorem \ref{t-rtc2}]
To prove this theorem, it suffices to show ``(ii)$\Rightarrow$(i)",
since the proof of ``(i)$\Rightarrow$(ii)" is similar to that of Theorem \ref{t-rtc1}.

For all $\vez\in(0,\fz)$ and $(x,t)\in\urn$,
let $f_\vez(x):=f\ast \phi_\vez(x)$,
$$
 G_{j_1,\,\dots,\,j_m}^{(\vez)}(x,t)
 :=(R_{j_1}\cdots R_{j_m}(f_\vez)\ast P_t)(x)
$$
and
$$
 \hbg^{(\vez)}(x,t)
 :=\sum_{j_1,\,\dots,\,j_m=0}^\fz
  G_{j_1,\,\dots,\,j_m}^{(\vez)}(x,t) e_{j_1}\otimes\cdots\otimes e_{j_m},
$$
where $P_t$ denotes the Poisson kernel as in \eqref{po-ke}
and $R_0:=I$ denotes the identity operator.
Since $f\ast \phi_\vez\in L^{1+r}(\rn)$ with $r\in[p_+,\fz)$
due to Lemma \ref{l-dense}(i),
it follows, from the Fourier
transform, that both $\hbg^{(\vez)}$ and $\nabla \hbg^{(\vez)}$ are symmetric,
and $\hbg^{(\vez)}$ is of trace zero.
Then, by Proposition \ref{p-ho1}, we find that $(f_\vez)_P^\ast \in\vlp$
and
$\|(f_\vez)_P^\ast\|_{\vlp}
\ls A_4$.
On the other hand, by the proof of Lemma \ref{l-dense}(ii),
we see that $f\ast P_t\in L^{1+r}(\rn)$. From this, \cite[p.\,10, Theorem 1.18]{StWe71}
and the fact that $\int_\rn \phi(x)\,dx=1$, we deduce that
$f\ast P_t\ast \phi_\vez$ converges to $f\ast P_t$ in measure as
$\vez\to0$, which, combined with the Riesz lemma
(see, for example, \cite[Theorem 1.1.13]{Gra1}),
implies that, there exist $\{\vez_k\}_{k\in\nn}\st(0,\fz)$ such that $\vez_k\to 0^+$ as
$k\to\fz$ and, for any given $(x,t)\in \urn$,
$$\lim_{k\to\fz} f_{\vez_k}\ast P_t(x)=\lim_{k\to\fz}f\ast P_t\ast \phi_{\vez_k}(x)
=f\ast P_t(x).$$
From this, we further deduce that, for all $x\in\rn$,
\begin{eqnarray*}
f_P^\ast(x)&&=\sup_{|y-x|<t}|f\ast P_t(y)|
=\sup_{|y-x|<t}\lim_{k\to\fz}|f_{\vez_k}\ast P_t(y)|\\
&&\le\lim_{k\to\fz}\sup_{|y-x|<t}|f_{\vez_k}\ast P_t(y)|
=\lim_{k\to\fz}(f_{\vez_k})_P^\ast(x).
\end{eqnarray*}
By this and the Fatou lemma (\cite[Theorem 2.61]{cfbook}), we conclude that
\begin{eqnarray*}
\|f_P^\ast\|_{\vlp}
&&\le \lf\|\lim_{k\to\fz}(f_{\vez_k}\ast P_t)^\ast\r\|_\vlp
\le \liminf_{k\to\fz}\|(f_{\vez_k})_P^\ast\|_\vlp\ls A_4.
\end{eqnarray*}
Therefore, $f\in\vhs$ and $\|f\|_{\vhs}\ls A_4$, which completes the
proof of Theorem \ref{t-rtc2}.
\end{proof}

\smallskip

\noindent{\bf Acknowledgements.} The authors would like to express their
deep thanks to Professor Yoshihiro Sawano for several useful conversations
on the subject of this article.

\bigskip

\noindent  Dachun Yang and Ciqiang Zhuo (Corresponding author)

\medskip

\noindent  School of Mathematical Sciences, Beijing Normal University,
Laboratory of Mathematics and Complex Systems, Ministry of
Education, Beijing 100875, People's Republic of China

\smallskip

\noindent {\it E-mails}: \texttt{dcyang@bnu.edu.cn} (D. Yang)

\hspace{0.98cm} \texttt{cqzhuo@mail.bnu.edu.cn} (C. Zhuo)

\vspace{0.5cm}

\noindent  Eiichi Nakai

\medskip

\noindent Department of Mathematics, Ibaraki University, Mito, Ibaraki 310-8512, Japan

\smallskip

\noindent {\it E-mail}: \texttt{enakai@mx.ibaraki.ac.jp} (E. Nakai)

\end{document}